\newcommand{\E}{\mathcal{E}}\newcommand{\F}{\mathcal{F}}\newcommand{\G}{\mathcal{G}}
\newcommand{\I}{\mathcal{I}} 
\renewcommand{\O}{\mathcal{O}}	
\newcommand{\X}{\mathcal{X}}
\newcommand{\Eg}{\mathfrak{E}}\newcommand{\Fg}{\mathfrak{F}}\newcommand{\Gg}{\mathfrak{G}}
\newcommand{\CC}{{\mathbb{C}}}
\newcommand{\TT}{{\mathbb{T}}}
\newcommand{\As}{{\mathscr{A}}}\newcommand{\Bs}{{\mathscr{B}}}\newcommand{\Cs}{{\mathscr{C}}}
\newcommand{\Ds}{{\mathscr{D}}}
\newcommand{\Es}{{\mathscr{E}}}
\newcommand{\Hs}{{\mathscr{H}}}
\newcommand{\Is}{{\mathscr{I}}}\newcommand{\Js}{{\mathscr{J}}} 
\newcommand{\Ls}{{\mathscr{L}}}	
\newcommand{\Ms}{{\mathscr{M}}}\newcommand{\Ns}{{\mathscr{N}}}\newcommand{\Os}{{\mathscr{O}}}
\newcommand{\Rs}{{\mathscr{R}}}
\newcommand{\Ts}{\mathscr{T}} 			
\newcommand{\Xs}{{\mathscr{X}}}
\DeclareFontFamily{U}{rsfs}{\skewchar\font127 }
\DeclareFontShape{U}{rsfs}{m}{n}{%
   <5> <6> rsfs5
   <7> rsfs7
   <8> <9> <10> <10.95> <12> <14.4> <17.28> <20.74> <24.88> rsfs10
}{}
\DeclareSymbolFont{rsfs}{U}{rsfs}{m}{n} 
\DeclareSymbolFontAlphabet{\scr}{rsfs}
\newcommand{\Af}{\scr{A}}
\newcommand{\Tf}{\scr{T}}
\DeclareMathOperator{\ke}{Ker}
\DeclareMathOperator{\spa}{span} 
\DeclareMathOperator{\Ob}{Ob}
\DeclareMathOperator{\Hom}{Hom}
\DeclareMathOperator{\Aut}{Aut}
\DeclareMathOperator{\Sp}{Sp}
\DeclareMathOperator{\ev}{ev}
\DeclareMathOperator{\End}{End}
\DeclareMathOperator{\Rep}{Rep}
\renewcommand{\emph}{\textbf} 										
\newcommand{\cj}[1]{\overline{#1}}									
\newcommand{\ip}[2]{\langle #1\mid #2\rangle}					
\newcommand{\cs}{C*}
\newcommand{\hlink}[2]{\href{#1}{\texttt{#2}}} 
\newtheorem{theorem}{Theorem}[section]							
\newtheorem{lemma}[theorem]{Lemma}
\newtheorem{proposition}[theorem]{Proposition}
\newtheorem{definition}[theorem]{Definition}
\newtheorem{remark}[theorem]{Remark}
\numberwithin{equation}{section}  		
\title{\textbf{A Horizontal Categorification of Gel'fand Duality}}
\author{\normalsize  
Paolo Bertozzini \footnote{Partially supported by the Thai Research Fund: grant n.~RSA4780022.} @, 
Roberto Conti $^*\ddag$, 
Wicharn Lewkeeratiyutkul $^*\S$ 
\\ 
\normalsize @ e-mail: \texttt{paolo.th@gmail.com} 
\\
\normalsize $\ddag$ \textit{Mathematics, School of Mathematical and Physical Sciences,} 
\\ 
\normalsize \textit{University of Newcastle, Callaghan, NSW 2308, Australia}
\\ 
\normalsize  e-mail: \texttt{Roberto.Conti@newcastle.edu.au} 
\\
\normalsize $\S$ \textit{Department of Mathematics, Faculty of Science,}
\\
\normalsize \textit{Chulalongkorn University, Bangkok 10330, Thailand} 
\\ 
\normalsize  e-mail: \texttt{Wicharn.L@chula.ac.th}
}
\date{\normalsize{22 July 2010}}
\begin{document}

\maketitle

\begin{center} 
\textit{This paper is dedicated to J.~E.~Roberts, the ``pioneer'' of C*-categories.}
\end{center}

\begin{abstract} \noindent 
In the setting of C*-categories, we provide a definition of spectrum of a commutative full C*-category as a one-dimensional unital  Fell bundle over a suitable groupoid (equivalence relation) and prove a categorical Gel'fand duality theorem generalizing the usual Gel'fand duality between the categories of commutative unital \hbox{C*-algebras} and compact Hausdorff spaces. 
Although many of the individual ingredients that appear along the way are well known, 
the somehow unconventional way we ``glue'' them together seems to shed some new light on the subject.

\medskip
 
\noindent 
MSC-2000: 
					46L87,			
					46M15, 			
					46L08,			
					46M20, 			
					16D90,			
					18F99. 			

\medskip

\noindent
Keywords: C*-category, Fell Bundle, Duality, Non-commutative Geometry. 
\end{abstract}

\section{Introduction}

There is no need to explain why the notions of geometry and space are
fundamental both in mathematics and in physics. 
Typically, a rigorous way to encode at least some basic geometrical content  into a mathematical framework makes use of the notion of a topological space, i.e.~a set equipped with a topological structure.
Although being just a preliminary step in the process of developing a more sophisticated apparatus, this way of thinking has been very fruitful for both abstract and concrete purposes. 

In a very important development, I.~M.~Gel'fand looked not at the topological space itself but rather at the space of all continuous functions on it, and realized that these seemingly different structures are in fact essentially the same. 
In slightly more precise terms, he found a basic example of anti-equivalence between certain categories of spaces and algebras 
(see for example~\cite[Theorems~II.2.2.4, II.2.2.6]{Bl} or~\cite[Section~6]{L2}). 
Since on the analytic side $C(X;\CC)$ is a special type of a Banach algebra called a C*-algebra, the study of possibly 
non-commutative C*-algebras has been often regarded as a good framework for ``non-commutative topology''. 

The duality aspect has been later enforced by the Serre-Swan equivalence~\cite[Theorem~6.18]{Kar} between vector bundles and suitable modules (see also~\cite{FGV} for a Hermitian version of the theorem and~\cite{Ta1,Ta2,W} for generalizations involving Hilbert bundles). 
By then, breakthrough results have continued to emerge both in geometry and functional
analysis, based on Gel'fand's original intuition, for about four decades. 

In connection with physical ideas, L.~Crane-D.~Yetter~\cite{CY} and J.~Baez-J.~Dolan~\cite{BD} have recently proposed a process of categorification of mathematical structures, in which sets and functions are replaced by categories and functors. 

From this perspective, in this paper, we wish to discuss a categorification of the notion of space extending and merging together Gel'fand duality and Serre-Swan equivalence. 

On one side of the extended duality we have a horizontal categorification (a terminology that we introduced 
in~\cite[Section~4.2]{BCL0}) of the notion  of commutative C*-algebra, namely a  commutative C*-category, or 
commutative \hbox{C*-algebroid} (see definition~\ref{def: c*cat}),  whilst the corresponding replacement of spaces, 
the spaceoids (see definition~\ref{def: spaceoid}), are supposed to parametrize their spectra. 
Spaceoids could be described in several different albeit equivalent ways. 
In this paper we have decided to focus on a characterization based on the notion of Fell bundle. Originally Fell bundles were introduced in connection with the study of representations of locally compact groups, but we argue that they come to life naturally on the basis of purely topological principles. 

Rather surprisingly, to the best of our knowledge, the notions of commutative C*-category and its spectrum have not been discussed before, despite the fact that (mostly highly non-commutative) C*-categories have been somehow intensively exploited over the last 30 years in several areas of research, including Mackey induction, superselection structure in quantum field theory, abstract group duality, subfactors and the Baum-Connes conjecture. 
At any rate, we make frequent contact with the related notions that can be found in the literature, hoping that our approach sheds new light on the subject by approaching the matter from a kind of unconventional viewpoint. 

Of course, once we have a running definition, it seems quite challenging in the next step to look for some natural occurrence of the notion of spaceoid in other contexts. 
For instance, we are not aware of any connection with the powerful concepts that have been introduced in algebraic topology to date.
Also, the appearance of bundles in the structure of the spectrum suggests an intriguing connection to local gauge theory but we have not developed these ideas yet. 
Some of our considerations have been motivated by a categorical approach to non-commutative geometry~\cite{BCL0}, 
and it is rewarding that some of its relevant tools (e.g., Serre-Swan theorem, Morita equivalence) appear naturally in our context.
More structure is expected to emerge when our categories are equipped with a differentiable structure. 
In the case of usual spaces, in the setting of A. Connes' non-commutative geometry~\cite{C}, this has been achieved by means of a Dirac operator, and then axiomatized using the concept of spectral triple. 

\medskip

Here below we present a short description of the content of the paper.

In section~\ref{sec: catA} we mention, mainly for the purpose of fixing our notation, some basic definitions on C*-categories. 
Section~\ref{sec: catT} opens recalling the notion of a Fell bundle in the case of involutive inverse base categories and then proceeds to introduce the definition of the category of spaceoids that will eventually subsume that of compact Hausdorff spaces in our duality theorem. 
The construction of a small commutative full C*-category starting from a spaceoid is undertaken in section~\ref{sec: funG}, while the spectral analysis of a commutative full C*-category is the subject of the more technical section~\ref{sec: funS} where a spectrum functor from the category of full commutative C*-categories to our category of spaceoids is defined.  

Section~\ref{sec: gelfand} presents the main result of this paper in the form of a duality between a certain category of commutative full C*-categories and the category of their spectra (spaceoids). 
A categorified version of Gel'fand transform is introduced and used to prove a Gel'fand spectral reconstruction theorem for full commutative C*-categories. Similarly a categorified evaluation transform is defined for the purpose of proving the representativity of the spectrum functor. 

Section~\ref{sec: exappl} is devoted to examples and applications. Here we mention several natural examples of commutative full C*-categories and we produce explicit constructions of spaceoids, either reassembling the Hermitian line bundles obtained 
in~\cite{BCL3} as spectra of imprimitivity Hilbert C*-bimodules, or (in a way completely independent from C*-categories) as associated line bundles for a categorified version of $\TT$-torsors. Among the several possible future applications of such categorified Gel'fand duality, we describe in some detail a categorified continuous functional calculus. 
The paper ends with an outlook. 

\medskip

While in the usual Gel'fand duality theory a spectrum is just a compact topological space, in the situation under consideration it comes up equipped with a natural bundle structure. 

In particular, the spectrum of a commutative full C*-category can be identified with a kind of ``groupoid of Hermitian line bundles"\footnote{To be more precise, the spectrum is uniquely associated to a connected groupoid (actually an equivalence relation) of Hermitian line bundles, over a given compact Hausdorff space $X$, with compositions and involutions provided by a strict realization of fiberwise tensor products and fiberwise duals (the spectrum being the $*$-category whose class of arrows is obtained as union of the $\Hom$ sets of this groupoid).} that can be conveniently described using the language of Fell bundles 
(or equivalently as a continuous field of one-dimensional C*-categories). 

Along the way, we also discuss several categorical versions of some well-known concepts like the Gel'fand transform that we think are of independent interest. 
Notice that a notion of Fourier transform in the setting of compact groupoids has been discussed by M.~Amini~\cite{Am}.

Our duality is reminiscent of an interesting but widely ignored duality result of A.~Takahashi~\cite{Ta1,Ta2}. 
Takahashi's duality can be essentially understood as a duality of 
categories equipped with a partially defined weakly associative tensor product and a (weak) involution,  
although he does not explicitly examine such natural structures on his categories of Hilbert bundles and Hilbert C*-modules.\footnote{With more precision, Takahashi's result can be formally expressed as a duality between (a weak involutive form of) 
``2-fold-categories'' (also called double categories in the literature).} 
The duality presented in this paper is essentially a version of the former, where we exploit a strict realization of these tensor products and involutions, considering commutative full C*-categories and spaceoids instead of Hilbert C*-modules and Hilbert bundles. 

\medskip 

Most of the results presented here have been announced in our survey paper~\cite{BCL0} and have been presented in several seminars in Thailand, Australia, Italy, UK since May 2006. 

\medskip

\emph{Note added in proof.} 
When the present work was under preparation, we became aware of some related results in T.~Timmermann's 
PhD dissertation~\cite{Ti} where, in the context of Hopf algebraic quantum groupoids, a very general non-commutative Pontryagin duality theory is developed by means of pseudomultiplicative unitaries in \cs-modules; and also in 
V.~Deaconu-A.~Kumjian-B.~Ramazan~\cite{DKR}, where a notion of Abelian Fell bundle  (which contains our commutative 
\cs-categories as a special case) is introduced and a structure theorem for them (in terms of ``twisted coverings of groupoids'') is proved. 
In the framework of $T$-duality, a Pontryagin type duality between commutative principal bundles and gerbes has been proposed by C.~Daenzer~\cite{D}; while a generalization of Pontryagin duality for locally compact Abelian group bundles has been provided by G.~Goehle~\cite{Go}. 

\section{Category $\Af$ of full commutative C*-categories}\label{sec: catA}

The notion of C*-category, introduced by J.~Roberts (see P.~Ghez-R.~Lima-J.~Roberts~\cite{GLR} and also P.~Mitchener~\cite{M1}) has been extensively used in algebraic quantum field theory: 
\begin{definition} \label{def: c*cat}
A \emph{C*-category} is a category $\Cs$ such that: the sets $\Cs_{AB}:=\Hom_\Cs(B,A)$ are complex Banach spaces; the compositions are bilinear maps such that $\|xy\|\leq\|x\|\cdot \|y\|$, for all $x\in \Cs_{AB}$, $y\in \Cs_{BC}$; there is an involutive antilinear contravariant functor $*:\Cs\to\Cs$, acting identically on the objects, such that $\|x^*x\|=\|x\|^2$, for all $x\in \Cs_{BA}$ and such that $x^*x$ is a positive element in the C*-algebra $\Cs_{AA}$, for every $x\in \Cs_{BA}$ (i.e.~$x^* x = y^* y$ for some 
$y \in \Cs_{AA}$). 
\end{definition}
In a C*-category $\Cs$, the ``diagonal blocks'' $\Cs_{AA}:=\Hom_\Cs(A,A)$ are unital C*-algebras and the ``off-diagonal blocks'' 
$\Cs_{AB}:=\Hom_\Cs(B,A)$ are unital Hilbert C*-bimodules on the \hbox{C*-algebras} $\Cs_{AA}$ and $\Cs_{BB}$. 
We say that $\Cs$ is \emph{full} if all the bimodules $\Cs_{AB}$ are imprimitivity bimodules. 
In practice, every full C*-category is uniquely associated to a ``strictification'' of a sub-equivalence relation of the Picard-Rieffel groupoid of unital C*-algebras,\footnote{By this we mean that, in a full C*-category $\Cs$, the family of $\Hom_\Cs(\cdot,\cdot)$ spaces is itself a strict subcategory of $1$-arrows in the weak $2$-C*-category of Hilbert C*-bimodules 
(with Rieffel tensor products and duals as compositions and involutions), that ``projects onto'' a sub-equivalence relation of the 
Picard-Rieffel groupoid (see~\cite{BCL3} for details on these categories).} where the term 
\emph{sub-equivalence relation} of a category denotes a subcategory that is itself an equivalence relation.
It is also very useful to see a C*-category as an involutive category fibered over the equivalence relation of its objects: in this way, a (full) C*-category becomes a special case of a (saturated) unital Fell bundle over an involutive (discrete) base category as described in 
definition~\ref{def: fell-bundle} below. 
We say that $\Cs$ is \emph{one-dimensional} if all the bimodules $\Cs_{AB}$ are one-dimensional and hence Hilbert spaces. 
Clearly, all one-dimensional C*-categories are full.

The first problem that we have to face is how to select a suitable full subcategory $\Af$ of ``commutative'' full C*-categories playing the role of horizontal categorification of the category of commutative unital C*-algebras. 
Since we are working in a completely strict categorical environment, our choice is to define a C*-category $\Cs$ to be 
\emph{commutative} if all its diagonal blocks $\Cs_{AA}$ are commutative C*-algebras. 

If $\Cs, \Ds\in \Af$ are two full commutative small C*-categories (with the same cardinality of the set of objects),
a \emph{morphism} in the category $\Af$ is an object bijective $*$-functor $\Phi:\Cs\to \Ds$. 

\medskip

For later usage, recall from~\cite[Definition~1.6]{GLR} and~\cite[Section~4]{M1} that a closed two-sided ideal $\Is$ in 
a C*-category $\Cs$ is always a $*$-ideal and that the quotient $\Cs/\Is$ has a natural structure as a C*-category 
with a natural \emph{quotient functor} $\pi: \Cs\to \Cs/\Is$. 
We have this ``first isomorphism theorem'', whose proof is standard. 
\begin{theorem}\label{th: fit}
Let $\Phi: \Cs \to \Ds$ be a $*$-functor between C*-categories. The \emph{kernel} of $\Phi$ defined by $\ker \Phi:=\{x\in \Cs\ | \ \Phi(x)=0\}$ is a closed two-sided ideal in $\Cs$ and there exists a unique $*$-functor $\check{\Phi}: \Cs/\ker \Phi\to \Ds$ such that 
$\check{\Phi} \circ \pi = \Phi$. 
The functor $\check{\Phi}$ is faithful and it is full if and only if $\Phi$ is full. 
\end{theorem}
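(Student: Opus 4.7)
The plan is to imitate the classical first isomorphism theorem for C*-algebras, carefully checking that the arguments survive when promoted from a single algebra to a C*-category whose hom-spaces are Banach bimodules.

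First I would verify that $\ker\Phi$ is a closed two-sided $*$-ideal of $\Cs$. By $\CC$-linearity of $\Phi$ on each hom-space, $\ker\Phi\cap\Cs_{AB}$ is a subspace. The ideal property $y(\ker\Phi)\subseteq\ker\Phi\supseteq(\ker\Phi)z$ (for composable $y,z$) follows from functoriality, and closure under involution is immediate from $\Phi(x^*)=\Phi(x)^*$. The only nontrivial point is norm-closure of each $\ker\Phi\cap\Cs_{AB}$: this reduces to showing that a $*$-functor between C*-categories is automatically norm-decreasing on every hom-space, which in turn follows from the standard C*-identity argument applied to the diagonal C*-algebras $\Cs_{AA}$ (where a $*$-homomorphism is contractive) together with the identity $\|x\|^2=\|x^*x\|$ that pushes the bound from the diagonal to each off-diagonal block $\Cs_{AB}$. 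Invoking the fact recalled just before the statement, any closed two-sided ideal is automatically a $*$-ideal, and $\Cs/\ker\Phi$ is a C*-category with quotient functor $\pi$.

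Next I would construct $\check\Phi$ by the obvious recipe $\check\Phi([x]):=\Phi(x)$ on arrows, and $\check\Phi(A):=\Phi(A)$ on objects. Well-definedness is immediate since $[x]=[x']$ means $x-x'\in\ker\Phi$. The fact that $\check\Phi$ is a $*$-functor (linearity, multiplicativity, compatibility with $*$) is a block-by-block transcription of the corresponding verification for C*-algebra quotients, using that $\pi$ itself is a $*$-functor and that $\Phi=\check\Phi\circ\pi$ forces the algebraic identities to descend. Uniqueness of $\check\Phi$ follows because $\pi$ is surjective on every hom-space, so any two candidates agreeing after precomposition with $\pi$ must coincide.

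Faithfulness is a tautology: $\check\Phi([x])=0$ iff $\Phi(x)=0$ iff $x\in\ker\Phi$ iff $[x]=0$ in $\Cs_{AB}/(\ker\Phi\cap\Cs_{AB})$. For the fullness equivalence, observe that $\check\Phi(\Cs/\ker\Phi)=\Phi(\Cs)$ as subsets of $\Ds$ (since $\pi$ is surjective on hom-sets), so one is full precisely when the other is; bijectivity of $\check\Phi$ on objects coincides with that of $\Phi$ by construction.

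The only genuinely technical step is the automatic continuity of a $*$-functor (needed to get closedness of the kernel), but this is a direct generalization of the well-known scalar case and presents no real obstacle; everything else is bookkeeping at the level of hom-spaces.
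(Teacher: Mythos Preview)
Your proposal is correct and is precisely the standard argument the paper has in mind: the paper does not spell out a proof at all, simply remarking that ``the proof is standard,'' and your sketch supplies exactly those standard details (closed $*$-ideal via automatic contractivity of $*$-functors, the universal factorisation through the quotient, and the tautological faithfulness/fullness verification).
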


Recall (see~\cite[Definition~1.8]{GLR}) that a \emph{representation} of a C*-category $\Cs$ is a $*$-functor $\Phi:\Cs\to\Hs$ with values in the C*-category $\Hs$ of bounded linear maps between Hilbert spaces. We end this section with a 
simple observation, whose proof is omitted. 

\begin{lemma}\label{cor: one-dim}
A one-dimensional C*-category $\Cs$, admits at least one $*$-functor $\gamma:\Cs\to\CC$.   
\end{lemma}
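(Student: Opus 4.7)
The plan is to construct the $*$-functor $\gamma$ explicitly by trivializing $\Cs$ relative to a basepoint. Since each diagonal $\Cs_{AA}$ is a unital commutative C*-algebra of complex dimension one, there is a canonical $*$-isomorphism $\Cs_{AA}\simeq\CC$ sending $1_A\mapsto 1$; I will identify the two throughout, so every positive element of $\Cs_{AA}$ of norm one equals $1_A$.

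Assuming $\Cs$ has at least one object (otherwise the statement is vacuous), pick a basepoint $A_0\in\Ob(\Cs)$. For each object $A$, choose a unit vector $u_A\in\Cs_{AA_0}$, setting $u_{A_0}:=1_{A_0}$. The key observation is that $u_A^*u_A\in\Cs_{A_0A_0}$ is a positive element of norm $\|u_A\|^2=1$, hence $u_A^*u_A=1_{A_0}$; symmetrically $u_A u_A^*=1_A$, because $\|u_Au_A^*\|=\|u_A\|^2=1$ and $u_Au_A^*$ is positive (it equals $(u_Au_A^*)^*(u_Au_A^*)$ up to an easy computation using $u_A^*u_A=1_{A_0}$) in the one-dimensional $\Cs_{AA}$. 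Thus each $u_A$ behaves like a ``partial isometry'' implementing an equivalence between $A$ and $A_0$.

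Now define $\gamma:\Cs\to\CC$ as follows: on objects send everything to the unique object of $\CC$, and on a morphism $x\in\Cs_{AB}$ set
\[
\gamma(x)\;:=\;u_A^*\,x\,u_B\;\in\;\Cs_{A_0A_0}\simeq\CC.
\]
Linearity of $\gamma$ on each $\Cs_{AB}$ is immediate from bilinearity of composition. Unitality, $\gamma(1_A)=u_A^*u_A=1_{A_0}=1$, follows from the identity above. Multiplicativity, for $x\in\Cs_{AB}$ and $y\in\Cs_{BC}$, reduces to
\[
\gamma(x)\gamma(y)\;=\;(u_A^*xu_B)(u_B^*yu_C)\;=\;u_A^*x(u_Bu_B^*)yu_C\;=\;u_A^*xyu_C\;=\;\gamma(xy),
\]
where the crucial middle equality uses $u_Bu_B^*=1_B$. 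Finally, for $x\in\Cs_{AB}$, since $*$ is antilinear and contravariant,
\[
\gamma(x^*)\;=\;u_B^*x^*u_A\;=\;(u_A^*xu_B)^*\;=\;\overline{\gamma(x)}.
\]

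There is no real obstacle here beyond verifying these four axioms; the one step that needs a moment's thought is checking that the chosen unit vectors satisfy $u_Au_A^*=1_A$ in each one-dimensional C*-algebra $\Cs_{AA}$, which rests entirely on the C*-norm identity together with the one-dimensionality of the hom-spaces. Once this is in hand, the formula $\gamma(x)=u_A^*xu_B$ mechanically produces a $*$-functor to $\CC$, completing the proof.
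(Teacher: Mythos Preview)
Your proof is correct. The paper explicitly omits the proof of this lemma (it reads ``We end this section with a simple observation, whose proof is omitted''), so there is nothing to compare against; your explicit trivialization relative to a basepoint, via chosen unit vectors $u_A\in\Cs_{AA_0}$ and the formula $\gamma(x)=u_A^*xu_B$, is exactly the natural argument and all four functor axioms are verified cleanly.
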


\section{Category $\Tf$ of full topological spaceoids}\label{sec: catT}

We now proceed to the identification of a good category $\Tf$ of ``spaceoids'' playing the role of horizontal categorification of the category of continuous maps between compact Hausdorff topological spaces. 
Making use of Gel'fand duality (see e.g.~\cite[Section~6]{L2}) for the diagonal blocks $\Cs_{AA}$ and  (Hermitian) Serre-Swan equivalence (see e.g.~\cite[Section~2.1.2]{BCL0} and references therein) 
for the off-diagonal blocks $\Cs_{AB}$ of a commutative full C*-category $\Cs$, we see that the spectrum of $\Cs$ identifies a 
sub-equivalence relation embedded in the Picard groupoid of Hermitian line bundles over the Gel'fand spectra of the diagonal 
C*-algebras $\Cs_{AA}$. 
Finally, reassembling such block-data, we recognize that, globally, the spectrum of a commutative full C*-category can be described as a very special kind of a Fell bundle that we call a full topological spaceoid. 
Fell bundles over topological groups were first introduced by J.~Fell~\cite[Section~II.16]{FD} and later generalized to the case of groupoids by S.~Yamagami (see A.~Kumjian~\cite{K} and references therein) and to the case of inverse semigroups by N.~Sieben 
(see R.~Exel~\cite[Section~2]{E}). 
These notions admit a natural extension to that of a Fell bundle over an involutive inverse category\footnote{By \emph{involutive category} we mean a category $\Xs$ equipped with an involution i.e.~an object preserving contravariant functor 
$*:\Xs\to\Xs$ such that $(x^*)^*=x$ for all $x\in \Xs$. If $\Xs$ has a topology we also require composition and involution to be continuous. $\Xs$ is an involutive \emph{inverse category} if $xx^*x=x$ for all $x\in \Xs$.} that we describe in definition~\ref{def: fell-bundle} below. 
For the definition of a Banach bundle, we refer to J.~Fell-R.~Doran~\cite[Section~I.13]{FD}. 
We recall, from A.~Kumjian~\cite[Section~2]{K}, that a Fell bundle over a groupoid is a Banach bundle $(\Es,\pi,\Xs)$ over a topological groupoid $\Xs$ whose total space $\Es$ is equipped with a continuous involution $*:\Es\to\Es$, denoted here by $*:e\mapsto e^*$, and with a continuous multiplication 
$\circ:\Es^2\to\Es$, denoted here by $\circ:(e_1,e_2)\mapsto e_1e_2$, defined on the set 
$\Es^2:=\{(e_1,e_2) \ | \ (\pi(e_1),\pi(e_2))\in \Xs^2\}$, where $\Xs^n$ denotes the family of $n$-paths in the groupoid $\Xs$, satisfying the following properties: 
\begin{itemize}
\item[] $\pi(e_1e_2)=\pi(e_1)\pi(e_2)$, for all $(e_1,e_2)\in \Es^2$, 
\item[] for all $(x_1,x_2)\in \Xs^2$, the multiplication map is bilinear when restricted to the sets $\Es_{x_1}\times\Es_{x_2}$, 
where $\Es_x:=\pi^{-1}(x)$, 
\item[] $(e_1e_2)e_3=e_1(e_2e_3)$, for all $(e_1,e_2,e_3)\in \Es$ such that $(\pi(e_1),\pi_2(e_2),\pi(e_3))\in \Xs^3$, 
\item[($1$)] $\|e_1e_2\|\leq \|e_1\|\cdot \|e_2\|$, for all $(e_1,e_2)\in\Es^2$,   
\item[] $\pi(e^*)=\pi(e)^*$, for all $e\in\Es$, where $\pi(e)^*$ denotes the inverse of $\pi(e)$ in $\Xs$, 
\item[] for all $x\in \Xs$, the involution map is conjugate-linear when restricted to the set $\Es_x$, 
\item[] $(e^*)^*=e$, for all $e\in \Es$, 
\item[] $(e_1e_2)^*=e_2^*e_1^*$, for all $(e_1,e_2)\in \Es^2$, 
\item[($2$)] $\|e^*e\|=\|e\|^2$, for all $e\in \Es$, 
\item[($3$)] the element $e^*e$ is positive in the C*-algebra $\Es_{\pi(e^*e)}$, for all $e\in\Es$.
\end{itemize}
This definition can be recasted in the following concise and slightly generalized form, that we
systematically adopt in the sequel. 
\begin{definition}\label{def: fell-bundle} 
A \emph{unital Fell bundle} $(\Es,\pi,\Xs)$ \emph{over an involutive inverse category} $\Xs$ is a Banach bundle that is also an involutive category $\Es$ fibered over the involutive category $\Xs$ with continuous fiberwise bilinear compositions and fiberwise conjugate-linear involutions satisfying properties $(1)$, $(2)$ and $(3)$ as above.

We say that the Fell bundle is \emph{rank-one} if $\Es_x$ is one-dimensional for all $x\in \Xs$.
\end{definition}

Note that a C*-category $\Cs$ can always be seen as a Fell bundle over the maximal equivalence relation $\Ob_\Cs\times\Ob_\Cs$ of its objects, with fibers $\Cs_{AB}$, for all $(A,B)\in \Ob_\Cs\times\Ob_\Cs$. Conversely, a Fell bundle whose base is such an equivalence relation can always be seen as a C*-category.

\begin{definition}\label{def: spaceoid}
A \emph{topological spaceoid} (or simply a spaceoid, for short) $(\Es,\pi,\Xs)$ is a unital rank-one Fell bundle over the product involutive topological category $\Xs:=\Delta_X\times\Rs_\Os$ where $\Delta_X:=\{(p,p) \ | \ p\in X\}$ is the minimal equivalence relation of a compact Hausdorff space $X$ and $\Rs_\Os:=\Os\times\Os$ is the maximal equivalence relation of a discrete space 
$\Os$. 
\end{definition}

With a slight abuse of notation, 
the arrows of the base involutive category $\Xs$ of a full spaceoid will simply be 
denoted by $p_{AB}:=((p,p),(A,B))\in \Delta_X\times\Rs_\Os$. 

Note that, since a constant finite-rank Banach bundle over a locally compact 
Hausdorff space is locally trivial~\cite[Remark~I.13.9]{FD} and hence a vector bundle, 
a topological spaceoid is a Hermitian line bundle over $\Xs$ and is a disjoint 
union of the Hermitian line bundles $(\Es_{AB},\pi|_{\Es_{AB}},\Xs_{AB})$, with $\Xs_{AB}:=\Delta_X\times\{AB\}$ and  
$\Es_{AB}:=\pi^{-1}(\Xs_{AB})$. 

Furthermore, a topological spaceoid can be seen as a one-dimensional C*-category that is a 
coproduct (in the category of small C*-categories) of the ``continuous field'' of the 
one-dimensional C*-categories $\Es_p:=\pi^{-1}(\Xs_p)$, where $\Xs_p:=\{(p,p)\}\times \Rs_\Os$, for all $p\in X$. 

\medskip 

A \emph{morphism of spaceoids}\footnote{Morphisms of spaceoids can be seen as examples of J.~Baez notion of spans (in this case, a span of the Fell bundles of the spaceoids).} $(f,\F)\colon(\Es_1,\pi_1,\Xs_1)\to(\Es_2,\pi_2,\Xs_2)$ is a pair $(f,\F)$ where:
\begin{itemize} 
\item
$f:=(f_\Delta,f_\Rs)$ with $f_\Delta:\Delta_1\to\Delta_2$ being a continuous map of topological spaces and 
$f_\Rs:\Rs_1\to\Rs_2$ an isomorphism of equivalence relations;
\item
$\F:f^\bullet(\Es_2)\to\Es_1$ is a fiberwise linear continuous $*$-functor such that $\pi_1\circ \F=\pi_2^f$, where 
$(f^\bullet(\Es_2),\pi_2^f,\Xs_1)$ denotes the standard $f$-pull-back\footnote{Recall that 
$f^\bullet(\Es_2):=\{(p_{AB},e)\in\Xs_1\times\Es_2\ \mid \ f(p_{AB})=\pi_2(e)\}$ 
with $f\circ\pi_2^f=\pi_2\circ f^{\pi_2}$, 
where $\pi_2^f$ and $f^{\pi_2}$ are defined on $f^\bullet(\Es_2)$ by 
$\pi_2^f(p_{AB},e):=p_{AB}$ and $f^{\pi_2}(p_{AB},e):=e$. 
If $\Es_2$ is a Fell bundle over $\Xs_2$, $f^\bullet(\Es_2)$ is a Fell bundle over $\Xs_1$.} of $(\E_2,\pi_2,\Xs_2)$.
\end{itemize}  

Topological spaceoids constitute a category if compositions and identities are given by 
\begin{equation*}
(g,\G)\circ(f,\F):=(g\circ f, \F\circ f^\bullet(\G)\circ \Theta^{\Es_3}_{g,f}) \qquad
\text{and} \qquad \iota(\Es,\pi,\Xs):=(\iota_{\Xs},\iota_{\Xs}^\pi),  
\end{equation*} 
where $\Theta^{\Es_3}_{g,f}:(g\circ f)^\bullet(\Es_3)\to f^\bullet(g^\bullet(\Es_3))$ is the natural isomorphisms between standard pull-backs 
given by $\Theta^{\Es_3}_{g,f}(x_1,e_3):=(x_1,(f(x_1),e_3))$, for all 
$(x_1,e_3)\in (g\circ f)^\bullet(\Es_3)$, 
and $f^\bullet(\G)$, thanks to the functorial properties of pull-backs, is defined on the standard pull-back by 
$f^\bullet(\G)(x_1,(x_2,e_3)):=(x_1,\G(x_2,e_3))$, for all $(x_1,(x_2,e_3)))\in f^\bullet(g^\bullet(\Es_3))$. 

\section{The section functor $\Gamma$}\label{sec: funG}

Here we are going to define a \emph{section functor} $\Gamma:\Tf\to \Af$ that to every spaceoid $(\Es,\pi,\Xs)$, with 
$\Xs:=\Delta_X\times\Rs_\Os$, associates a commutative full C*-category $\Gamma(\Es)$ as follows:
\begin{itemize}
\item 
$\Ob_{\Gamma(\Es)}:=\Os$; 
\item 
for all $A,B\in \Ob_{\Gamma(\Es)}$, $\Hom_{\Gamma(\Es)}(B,A):=\Gamma(\Xs_{AB};\Es)$, 
where $\Gamma(\Xs_{AB};\Es)$ denotes the set of continuous sections 
$\sigma:\Delta_X\times\{(A,B)\}\to\Es$,  
$\sigma:p_{AB}\mapsto \sigma^{AB}_p\in\Es_{p_{AB}}$ 
of the restriction $(\Es_{AB},\pi|_{\Es_{AB}},\Xs_{AB})$ of $(\Es,\pi,\Xs)$ to the base space $\Xs_{AB}$. 
\item 
for all $\sigma\in \Hom_{\Gamma(\Es)}(B,A)$ and $\rho\in \Hom_{\Gamma(\Es)}(C,B)$: 
\begin{gather*}
\sigma\circ\rho:p_{AC} \mapsto (\sigma\circ\rho)^{AC}_p:=\sigma^{AB}_p\circ\rho^{BC}_p,  
\\
\sigma^*: p_{BA} \mapsto (\sigma^*)^{BA}_p:=(\sigma^{AB}_p)^*, 
\\  
\|\sigma\|:=\sup_{p\in \Delta_X}\|\sigma^{AB}_p\|_{\Es},  
\end{gather*}
with operations taken in the total space $\Es$ of the Fell bundle. 
\end{itemize}
In the following, since for all $\sigma\in \Gamma(\Es)=\biguplus_{AB}\Gamma(\Es)_{AB}$, the discrete indeces $AB$ are already implicit in the specification of the section $\sigma\in \Gamma(\Es)_{AB}$, we will simply use the shorter notation 
$\sigma_p:=\sigma_p^{AB}$ to denote the evaluation of the section $\sigma$ at the point $p_{AB}\in \Xs$. 

\medskip 

By construction, the commutative C*-category $\Gamma(\Es)$ so obtained has 
sections of Hermitian line bundles on a compact Hausdorff space 
as $\Hom$ spaces, and thus it is full.  

We extend now the definition of $\Gamma$ to the morphisms of $\Tf$. 
Let $(f,\, \F)$ be a morphism in $\Tf$ from $(\Es_1,\pi_1,\Xs_1)$ to  $(\Es_2,\pi_2,\Xs_2)$. 
Given $\sigma\in \Gamma(\Es_2)$, we consider the unique section $f^\bullet(\sigma):\Xs_1\to f^\bullet(\Es_2)$ 
such that $f^{\pi_2}\circ f^\bullet(\sigma)=\sigma\circ f$ and the composition $\F\circ f^\bullet(\sigma)$. 
In this way we have a map  
\begin{equation*}
\Gamma_{(f,\,\F)}: \Gamma(\Es_2)\to\Gamma(\Es_1), \quad \Gamma_{(f,\,\F)}:\sigma\mapsto\F\circ f^\bullet(\sigma), \quad \forall \sigma\in \Gamma(\Es_2).
\end{equation*} 

\begin{proposition}
For any morphism $(\Es_1,\pi_1,\Xs_1)\xrightarrow{(f,\,\F)} (\Es_2,\pi_2,\Xs_2)$ in the category $\Tf$, the map $\Gamma_{(f,\,\F)}:\Gamma(\Es_2)\to \Gamma(\Es_1)$ is a morphism in the category $\Af$. 

The pair of maps $\Gamma:(\Es,\pi,\Xs) \mapsto \Gamma(\Es)$ and $\Gamma:(f,\,\F)\mapsto\Gamma_{(f,\,\F)}$ gives a contravariant functor from the category $\Tf$ of spaceoids to the category $\Af$ of small full commutative C*-categories. 
\end{proposition}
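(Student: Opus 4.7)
The plan is to verify first that $\Gamma_{(f,\F)}$ really lands in $\Gamma(\Es_1)$ and is a $*$-functor, then to check functoriality of $\Gamma$. For a section $\sigma \in \Gamma(\Es_2)_{AB}$, the pull-back $f^\bullet(\sigma)$ is continuous by the universal property of pull-backs (continuity of $\sigma \circ f$ plus the section property), and fibrewise it takes values in $f^\bullet(\Es_2)_{p_{AB}} \cong (\Es_2)_{f(p_{AB})}$. Applying $\F$, which is a fibrewise linear continuous $*$-functor over the identity on $\Xs_1$, yields a continuous section of $\Es_1$. Since $f_\Rs$ is a bijection of $\Rs_1 \to \Rs_2$, the image section lies in $\Gamma(\Es_1)_{f_\Rs^{-1}(A,B)}$, so $\Gamma_{(f,\F)}$ sends $\Hom$-spaces to $\Hom$-spaces in a manner compatible with the object bijection $f_\Rs^{-1}$.

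Next I would check that $\Gamma_{(f,\F)}$ is an object-bijective $*$-functor. Object bijectivity is immediate from $f_\Rs$ being an isomorphism. Linearity on each $\Hom$-space follows from the fibrewise linearity of $\F$ and of the pull-back map $\sigma \mapsto f^\bullet(\sigma)$. Preservation of composition reduces, pointwise at $p \in \Delta_{X_1}$, to the identity $\F_{p_{AC}}((\sigma \circ \rho)_{f(p_{AC})}) = \F_{p_{AB}}(\sigma_{f(p_{AB})}) \circ \F_{p_{BC}}(\rho_{f(p_{BC})})$, which is exactly the functoriality of $\F$ on the fibre. Preservation of involution is the analogous fibrewise statement for $*$. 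That $\Gamma_{(f,\F)}$ is bounded (in fact a contraction or at least norm-continuous) follows from property $(1)$ and fibrewise continuity of $\F$, giving an actual morphism in $\Af$.

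For functoriality, I would compute $\Gamma_{(g,\G) \circ (f,\F)}(\sigma)$ for $\sigma \in \Gamma(\Es_3)$ using the definition $(g,\G)\circ(f,\F) = (g\circ f,\; \F \circ f^\bullet(\G) \circ \Theta^{\Es_3}_{g,f})$. Unwinding: $\Gamma_{(g,\G)\circ(f,\F)}(\sigma) = \F \circ f^\bullet(\G) \circ \Theta^{\Es_3}_{g,f} \circ (g\circ f)^\bullet(\sigma)$. The naturality isomorphism $\Theta^{\Es_3}_{g,f}$ identifies $(g\circ f)^\bullet(\sigma)$ with $f^\bullet(g^\bullet(\sigma))$, giving $\F \circ f^\bullet(\G \circ g^\bullet(\sigma)) = \F \circ f^\bullet(\Gamma_{(g,\G)}(\sigma)) = \Gamma_{(f,\F)}(\Gamma_{(g,\G)}(\sigma))$, which is the contravariance statement. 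Preservation of identities is a direct check using $\iota_{\Xs}^\pi$ being the identity functor on the tautological pull-back.

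The main obstacle is the bookkeeping in the functoriality step: one must carefully verify that the natural iso $\Theta^{\Es_3}_{g,f}$ intertwines the two canonical sections $(g\circ f)^\bullet(\sigma)$ and $f^\bullet(g^\bullet(\sigma))$, i.e.\ $\Theta^{\Es_3}_{g,f} \circ (g\circ f)^\bullet(\sigma) = f^\bullet(g^\bullet(\sigma))$. This is just the universal property of iterated pull-backs applied to sections, but it is the one place where the abstract definitions have to be carefully matched against the explicit formulas given for $\Theta^{\Es_3}_{g,f}$ and $f^\bullet(\G)$ in the excerpt. Everything else is essentially a fibrewise reduction to properties of $\F$ and $\G$ as $*$-functors of Fell bundles.
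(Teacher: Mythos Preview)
Your proposal is correct and follows essentially the same approach as the paper: the paper's proof merely states that the required identities $\Gamma_{(g,\G)\circ(f,\F)}=\Gamma_{(f,\F)}\circ\Gamma_{(g,\G)}$ and $\Gamma_{(\iota_{\Xs},\iota_\Xs^\pi)}=\iota_{\Gamma(\Es)}$ are ``obtained by tedious but straightforward calculations'', and what you have written is exactly a careful execution of those calculations (including the key verification that $\Theta^{\Es_3}_{g,f}\circ (g\circ f)^\bullet(\sigma)=f^\bullet(g^\bullet(\sigma))$). One small remark: you need not invoke property~$(1)$ for boundedness, since a $*$-functor between C*-categories is automatically contractive, and the definition of morphisms in $\Af$ requires only an object-bijective $*$-functor.
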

\begin{proof}
Let $(\Es_1,\pi_1,\Xs_1)\xrightarrow{(f,\,\F)}(\Es_2,\pi_2,\Xs_2)$ and 
$(\Es_2,\pi_2,\Xs_2)\xrightarrow{(g,\,\G)}(\Es_3,\pi_3,\Xs_3)$ be two composable morphisms in the category $\Tf$ and let 
$(\Es,\pi,\Xs)\xrightarrow{(\iota_{\Xs},\,\iota_\Xs^\pi)}(\Es,\pi,\Xs)$ be the identity morphism of $(\Es,\pi,\Xs)$. 
To complete the proof we must show that  
\begin{equation*}
\Gamma_{(g,\,\G)\circ(f,\,\F)}=\Gamma_{(f,\,\F)}\circ\Gamma_{(g,\,\G)}, \quad 
\Gamma_{(\iota_{\Xs},\,\iota_\Xs^\pi)}=\iota_{\Gamma(\Es)}, 
\end{equation*}
and these are obtained by tedious but straightforward calculations. 
\end{proof}

\section{The spectrum functor $\Sigma$}\label{sec: funS}

This section is devoted to the construction of a \emph{spectrum functor} $\Sigma:\Af\to\Tf$ that to every commutative full 
C*-category $\Cs$ associates its \emph{spectral spaceoid} $\Sigma(\Cs)$. 

\medskip 

Letting $\Cs$ be a C*-category, we denote by $\Rs^\Cs$ the topologically discrete 
$*$-category $\Cs/\Cs\simeq\Rs_{\Ob_\Cs}$ and by $\CC\Rs^\Cs:=\rho^\bullet(\CC)$ 
the one-dimensional C*-category pull-back of $\CC$ (considered as a 
\hbox{C*-category} with only one object $\bullet$) under the constant map 
$\rho:\Rs^\Cs\to \{\bullet\}$. Note that, via the canonical projection $*$-functor $\Cs\to\Rs^\Cs:=\Cs/\Cs$, from the defining property of pull-backs there is a bijective map $\omega\mapsto \widetilde{\omega}$ between the set of 
$\CC$-valued $*$-functors $[\Cs;\CC]$ and the object-preserving elements in the set of $\CC\Rs^\Cs$-valued $*$-functors 
$[\Cs;\CC\Rs^\Cs]$.

By definition, two $*$-functors $\omega_1$, $\omega_2$ in $[\Cs;\CC]$ are \emph{unitarily equivalent}, see 
P.~Mitchener~\cite[Section~2]{M2}, if there exists a ``unitary'' natural transformation $A\mapsto\nu_A\in \TT$ between them. 
\\ 
Note that the set $\Is_\omega:=\{x\in \Cs\ | \ \omega(x)=0\}$, which is also equal to $\{x\in \Cs\ | \ \omega(x^*x)=0\}$, is an ideal in 
$\Cs$ and $\Is_{\omega_1}=\Is_{\omega_2}$ if (and only if) the equivalence classes $[\omega_1]$ and $[\omega_2]$ coincide. 

We also need the following lemmas whose routine proofs are omitted:\footnote{Note that, for $\omega\in[\Cs;\CC]$ and 
$A,B\in \Ob_\Cs$, we denote by $\omega_{AB}$ the restriction of $\omega$ to  $\Cs_{AB}$.} 
\begin{lemma}\label{lem: principal}
If $\omega,\omega'\in [\Cs;\CC]$ are unitarily equivalent, there is a unique map $\psi:\Rs^\Cs\to\TT$ such that 
$\omega'_{AB}=\psi_{AB}\cdot\omega_{AB}$ for all $AB\in \Rs^\Cs$ 
given by $\psi_{AB} = \nu_B \nu_A^{-1}$, where $\nu$ is a unitary natural 
transformation from $\omega$ to $\omega'$,
and the map $\psi:AB\mapsto\psi_{AB}$ is a $*$-functor:
\begin{equation}\label{eq: fasi}
\psi_{AB}\psi_{BC}=\psi_{AC}, \quad 
\psi_{AB}=\psi_{BA}^{-1},  \quad  
\psi_{AA}=1_\CC.
\end{equation}
Conversely, given a $*$-functor
$\psi\in[\Rs^\Cs;\TT]$, two $*$-functors $\omega,\omega'$ such that $\omega'_{AB}=\psi_{AB}\omega_{AB}$ are unitarily equivalent. 
\end{lemma}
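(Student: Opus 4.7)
The plan is to unpack the data of a unitary natural transformation
$\nu\colon\omega\to\omega'$ and verify both implications by elementary
computations in the Abelian group $\TT$. Such a $\nu$ amounts to an
assignment $A\mapsto\nu_A\in\TT\subset\CC$, and, since $\CC$ is commutative,
its naturality square applied to any $x\in\Cs_{AB}=\Hom_\Cs(B,A)$ reduces to
the scalar identity $\omega'(x)=\nu_B\nu_A^{-1}\cdot\omega(x)$.

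For the forward implication I would set $\psi_{AB}:=\nu_B\nu_A^{-1}$; the
identity just recalled is exactly $\omega'_{AB}=\psi_{AB}\cdot\omega_{AB}$
blockwise, and the $*$-functor conditions~(\ref{eq: fasi}) reduce to trivial
manipulations:
\begin{equation*}
\psi_{AB}\psi_{BC}=(\nu_B\nu_A^{-1})(\nu_C\nu_B^{-1})=\nu_C\nu_A^{-1}=\psi_{AC},
\quad \psi_{BA}=\psi_{AB}^{-1}, \quad \psi_{AA}=1_\CC.
\end{equation*}
Uniqueness of $\psi$ follows from the fullness of $\Cs$: if $\omega$ vanished
on all of $\Cs_{AB}$, then by continuity it would vanish on the closed linear
span of $\{xy^*\mid x,y\in\Cs_{AB}\}$, which by fullness equals $\Cs_{AA}$,
contradicting $\omega(1_A)=1$. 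Hence some $x_0\in\Cs_{AB}$ has
$\omega(x_0)\neq 0$, and $\psi_{AB}=\omega'(x_0)/\omega(x_0)$ is pinned down.

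For the converse, given a $*$-functor $\psi\in[\Rs^\Cs;\TT]$, I would fix any
object $A_0\in\Ob_\Cs$ and set $\nu_A:=\psi_{A_0A}\in\TT$. Functoriality of
$\psi$ then yields $\nu_B\nu_A^{-1}=\psi_{A_0B}\psi_{AA_0}=\psi_{AB}$, so the
hypothesis $\omega'_{AB}=\psi_{AB}\omega_{AB}$, combined with commutativity
of $\CC$, is exactly the naturality condition for $\nu\colon\omega\to\omega'$,
proving $\omega\sim\omega'$. Overall the argument is purely algebraic; the
only (mild) subtlety is the use of fullness to secure uniqueness.
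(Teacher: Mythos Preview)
The paper explicitly omits the proof of this lemma (``routine proofs are omitted''), so there is no argument to compare against; your verification is correct and is exactly the elementary computation the authors presumably intend. One small remark: your uniqueness argument invokes fullness of $\Cs$, which is not listed among the lemma's hypotheses but is the standing assumption of the section --- and it is genuinely needed, since without it some $\omega_{AB}$ could vanish identically, leaving $\psi_{AB}$ undetermined on that block.
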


\begin{lemma}\label{lem: iso-cr}
Every object preserving $*$-automorphism $\gamma$ of the C*-category $\CC\Rs^\Cs$ is given by the multiplication by an element 
$\psi\in[\Rs^\Cs;\TT]$ i.e.~$\gamma(x)=\psi_{AB}\cdot x$ for all $x\in (\CC\Rs^\Cs)_{AB}$. 
\end{lemma}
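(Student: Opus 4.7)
The plan is to exploit the fact that each $\Hom$-space of $\CC\Rs^\Cs$ is canonically isomorphic to $\CC$, so an object-preserving $*$-automorphism is forced to be fiberwise multiplication by a scalar, and then to read off the three relations \eqref{eq: fasi} from the $*$-functoriality of $\gamma$.

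First I would fix $A,B\in\Ob_\Cs$ and observe that, by construction of the pull-back, $(\CC\Rs^\Cs)_{AB}\cong\CC$ as a one-dimensional complex Banach space, with a distinguished generator given by the unit $1_{AB}$ (the element pulled back from $1\in\CC$). Since $\gamma$ is object-preserving and $\CC$-linear on each $\Hom$-space (being a $*$-functor of C*-categories), the restriction $\gamma|_{(\CC\Rs^\Cs)_{AB}}$ is a $\CC$-linear endomorphism of $\CC$, hence multiplication by a scalar $\psi_{AB}\in\CC$. Bijectivity of $\gamma$ forces $\psi_{AB}\neq 0$, so it suffices to define $\psi_{AB}:=\gamma(1_{AB})$ (viewed in $\CC$) and to check that $\psi\in[\Rs^\Cs;\TT]$.

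Next I would verify the three relations in \eqref{eq: fasi} by testing $\gamma$ on the distinguished generators. Functoriality at identities gives $\gamma(1_{AA})=1_{AA}$, so $\psi_{AA}=1_\CC$. Functoriality at composition gives $\gamma(1_{AB}\circ 1_{BC})=\gamma(1_{AB})\circ\gamma(1_{BC})$; since $1_{AB}\circ 1_{BC}=1_{AC}$ in $\CC\Rs^\Cs$ (the composition being inherited from multiplication in $\CC$), this reads $\psi_{AC}=\psi_{AB}\psi_{BC}$. Finally, preservation of the involution yields $\gamma(1_{AB}^*)=\gamma(1_{AB})^*$; since $1_{AB}^*=1_{BA}$, this gives $\psi_{BA}=\overline{\psi_{AB}}$, and combining with the composition relation $\psi_{AB}\psi_{BA}=\psi_{AA}=1$ produces $|\psi_{AB}|^2=1$, hence $\psi_{AB}\in\TT$ and $\psi_{BA}=\psi_{AB}^{-1}$.

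Once these relations are in place, linearity of $\gamma$ on each $(\CC\Rs^\Cs)_{AB}$ immediately gives $\gamma(x)=\psi_{AB}\cdot x$ for every $x$, which is the stated formula. I do not anticipate any serious obstacle: the whole argument is essentially bookkeeping built on the one-dimensionality of the fibers, and the only mildly delicate point is remembering that the involution in $\CC\Rs^\Cs$ is the complex conjugation inherited from $\CC$, so that the equation $\psi_{BA}=\overline{\psi_{AB}}$ (rather than $\psi_{BA}=\psi_{AB}$) comes out correctly and combines with the composition identity to place $\psi$ in $\TT$ rather than merely in $\CC^\times$.
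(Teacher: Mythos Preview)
Your argument is correct and is exactly the routine verification the paper has in mind; in fact the paper omits the proof of this lemma entirely, flagging it as one of the ``lemmas whose routine proofs are omitted''. Your use of one-dimensionality of the fibers to extract the scalars $\psi_{AB}$, followed by reading off the relations~\eqref{eq: fasi} from functoriality and $*$-preservation, is the natural (and essentially unique) way to fill in the omitted details.
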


\begin{proposition}\label{prop: field} 
Two $*$-functors $\omega$, $\omega' \in [\Cs;\CC]$ are unitarily equivalent if and only if $\omega_{AA}=\omega'_{AA}$ for all 
$A\in \Ob_\Cs$. 
\end{proposition}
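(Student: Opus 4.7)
The forward implication is immediate: if $\nu: A \mapsto \nu_A \in \TT$ is a unitary natural transformation from $\omega$ to $\omega'$, then for $x \in \Cs_{AA}$ naturality gives $\omega'(x) = \nu_A \omega(x) \nu_A^{-1}$, which equals $\omega(x)$ because $\nu_A$ is a scalar. So I only need to treat the converse.

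Assume $\omega_{AA} = \omega'_{AA}$ for all $A \in \Ob_\Cs$. For $x \in \Cs_{AB}$ one has $x^*x \in \Cs_{BB}$, so
\begin{equation*}
|\omega(x)|^2 = \omega(x^*x) = \omega'(x^*x) = |\omega'(x)|^2.
\end{equation*}
In particular $\omega$ and $\omega'$ vanish on exactly the same elements of $\Cs_{AB}$, and whenever $\omega(x) \neq 0$ there is a unique $\psi(x) \in \TT$ with $\omega'(x) = \psi(x)\,\omega(x)$. The plan is to show that $\psi(x)$ depends only on the block $AB$ containing $x$, producing a $*$-functor $\psi \in [\Rs^\Cs;\TT]$ to which Lemma~\ref{lem: principal} can be applied.

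The key step uses fullness. Fix $A,B \in \Ob_\Cs$. Since $\Cs_{AB}$ is an imprimitivity bimodule, the linear span of $\{xx^* \st x \in \Cs_{AB}\}$ is norm-dense in $\Cs_{AA}$; because $\omega_{AA}$ is a nonzero (in particular, nonzero on the unit) character of the commutative C*-algebra $\Cs_{AA}$, there must exist $x \in \Cs_{AB}$ with $\omega_{AA}(xx^*) \neq 0$, hence $\omega(x) \neq 0$. For any two such $x,y \in \Cs_{AB}$ with $\omega(x),\omega(y) \neq 0$, apply $\omega = \omega'$ on the diagonal to $x^*y \in \Cs_{BB}$:
\begin{equation*}
\overline{\omega(x)}\,\omega(y) = \omega(x^*y) = \omega'(x^*y) = \overline{\psi(x)\omega(x)}\,\psi(y)\omega(y) = \overline{\psi(x)}\psi(y)\,\overline{\omega(x)}\,\omega(y),
\end{equation*}
which forces $\psi(x) = \psi(y)$. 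Set $\psi_{AB}$ equal to this common value; the identity $\omega'(x) = \psi_{AB}\,\omega(x)$ then extends to every $x \in \Cs_{AB}$ (trivially when $\omega(x) = 0$, since then also $\omega'(x) = 0$).

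Finally, $\psi_{AA} = 1$ because $\omega_{AA} = \omega'_{AA}$, and choosing representatives $x \in \Cs_{AB}$, $y \in \Cs_{BC}$ with $\omega(x),\omega(y) \neq 0$ (so that $\omega(xy) = \omega(x)\omega(y) \neq 0$) gives
\begin{equation*}
\psi_{AC}\,\omega(xy) = \omega'(xy) = \omega'(x)\omega'(y) = \psi_{AB}\psi_{BC}\,\omega(xy),
\end{equation*}
hence $\psi_{AC} = \psi_{AB}\psi_{BC}$; the involutive relation $\psi_{BA} = \psi_{AB}^{-1}$ follows from $\omega(x^*) = \overline{\omega(x)}$. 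Thus $\psi$ satisfies the cocycle relations \eqref{eq: fasi} and Lemma~\ref{lem: principal} yields a unitary natural transformation between $\omega$ and $\omega'$. The main subtlety in the argument is the use of fullness to guarantee the existence of a non-annihilated element in each off-diagonal block $\Cs_{AB}$; without it, $\psi_{AB}$ could not be defined and the bridge between diagonal data and off-diagonal phases would break down.
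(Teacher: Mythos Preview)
Your proof is correct and more elementary than the paper's. The paper argues the converse by passing to the associated object-preserving $*$-functors $\widetilde{\omega},\widetilde{\omega}'\in[\Cs;\CC\Rs^\Cs]$, observing that $\ke\widetilde{\omega}=\Is_\omega=\Is_{\omega'}=\ke\widetilde{\omega}'$, invoking the first isomorphism theorem (theorem~\ref{th: fit}) to get two $*$-isomorphisms $\Cs/\ke(\omega)\to\CC\Rs^\Cs$, and then applying lemma~\ref{lem: iso-cr} to the composite automorphism of $\CC\Rs^\Cs$ to extract $\psi$. You bypass this machinery entirely: the single computation $\omega(x^*y)=\omega'(x^*y)$ shows directly that the phase $\psi(x)$ is constant on each block, and the cocycle relations~\eqref{eq: fasi} follow by one more line. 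Both proofs finish with lemma~\ref{lem: principal}. Your explicit use of fullness to produce an $x\in\Cs_{AB}$ with $\omega(x)\neq 0$ is exactly what the paper's argument also needs (hidden in the requirement that $\hat\omega$ be onto $\CC\Rs^\Cs$); the paper additionally sketches how to handle the case where some $\omega_{AB}$ vanish by restricting to connected components of the image, but in the full setting of the paper this is unnecessary, as your argument shows.
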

\begin{proof}
By lemma~\ref{lem: principal}, if $[\omega]=[\omega']$, then $\omega'_{AA}=\psi_{AA}\cdot\omega_{AA}=\omega_{AA}$, for all objects $A$. 

Let $\omega,\omega'\in[\Cs;\CC]$ and suppose that $\omega_{AA}=\omega'_{AA}$, for all $A\in \Ob_\Cs$. Consider the corresponding object-preserving 
$\CC\Rs^\Cs$-valued $*$-functors $\widetilde{\omega}$, $\widetilde{\omega}'\in[\Cs;\CC\Rs^\Cs]$. 
Note that $\ke(\widetilde{\omega})=\Is_\omega=\Is_{\omega'}=\ke(\widetilde{\omega}')$ and hence, $\omega_{AB}$,  
$\widetilde{\omega}_{AB}$ are nonzero if and only if $\omega'_{AB}$, $\widetilde{\omega}'_{AB}$ are nonzero. 
If $\omega_{AB}$ is nonzero for all $AB\in\Rs^\Cs$, by theorem~\ref{th: fit} we have two $*$-isomorphisms 
\begin{equation*}
\Cs/\ke(\omega)\xrightarrow{\hat{\omega}}\CC\Rs^\Cs\xleftarrow{\,\hat{\omega}'}\Cs/\ke(\omega'). 
\end{equation*} 
From lemma~\ref{lem: iso-cr} there is a 
$\psi\in[\Rs^\Cs;\TT]$ such that 
$\hat{\omega}'=\psi\cdot \hat{\omega}$ and hence also $\omega'=\psi\cdot\omega$ so that the proposition follows from 
lemma~\ref{lem: principal}. 

In order to complete the proof, notice that the images of $\hat\omega$ and 
$\hat\omega'$ coincide and then apply the above argument to the connected components of the image category. 
\end{proof}

Given $x\in \Cs_{AA}$ for some object $A$, 
by evaluation in $x$ we mean the map $\ev_x:[\Cs;\CC]\to\CC$ defined by $\omega\mapsto \omega(x)$.

\begin{proposition}
The set $[\Cs;\CC]$ of $\CC$-valued $*$-functors $\omega:\Cs\to\CC$, with the weakest topology making all evaluations continuous, is a compact Hausdorff topological space.  
\end{proposition}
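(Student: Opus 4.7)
The plan is to realize $[\Cs;\CC]$ as a closed subspace of a product of compact discs and apply Tychonoff.

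First I would establish a uniform bound for every $\omega\in[\Cs;\CC]$. For $x\in\Cs_{AA}$, the restriction $\omega|_{\Cs_{AA}}$ is a \hbox{$*$-homomorphism} of commutative C*-algebras into $\CC$, hence contractive: $|\omega(x)|\le\|x\|$. For a general $x\in\Cs_{AB}$, functoriality and the $*$-operation give $|\omega(x)|^{2}=\omega(x)\overline{\omega(x)}=\omega(x^{*}x)$, and since $x^{*}x\in\Cs_{BB}$ lies in a commutative C*-algebra on which $\omega$ is contractive, we obtain $|\omega(x)|\le\|x\|$ for every arrow $x$ of $\Cs$.

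Next I would embed
\begin{equation*}
\iota:[\Cs;\CC]\hookrightarrow\prod_{x\in\Cs}\overline{D}(0,\|x\|),\qquad \omega\mapsto(\omega(x))_{x\in\Cs},
\end{equation*}
where each factor is the closed disc of radius $\|x\|$ in $\CC$. By Tychonoff's theorem the codomain is compact Hausdorff in the product topology, and by definition the topology on $[\Cs;\CC]$ is exactly the one induced from this embedding (since both are the initial topology making all coordinate evaluations $\ev_{x}$ continuous). Hausdorffness of $[\Cs;\CC]$ is then immediate: if $\omega\neq\omega'$, some $\ev_{x}$ separates them into disjoint opens of $\CC$, whose preimages are disjoint opens in $[\Cs;\CC]$.

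It remains to show that the image $\iota([\Cs;\CC])$ is closed in the product, which will be the technical heart of the argument. The defining conditions for a $\CC$-valued $*$-functor are: (i) additivity $\omega(x+y)=\omega(x)+\omega(y)$ and homogeneity $\omega(\lambda x)=\lambda\omega(x)$ on each $\Cs_{AB}$, whenever the sum makes sense; (ii) multiplicativity $\omega(xy)=\omega(x)\omega(y)$ whenever $xy$ is defined; (iii) compatibility with involution $\omega(x^{*})=\overline{\omega(x)}$; (iv) preservation of identities $\omega(1_{A})\in\{0,1\}$ (this follows from idempotency of $\omega(1_A)=\omega(1_A)^2$ combined with $\omega(1_A)=\overline{\omega(1_A)}$). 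Each single condition has the form $F_{x,y}(\omega(x),\omega(y),\omega(z))=0$ for some continuous polynomial $F_{x,y}$ in finitely many coordinates, hence cuts out a closed subset of the product (since projections onto finitely many coordinates are continuous). The set $\iota([\Cs;\CC])$ is the intersection of all these closed sets, therefore closed, and consequently compact.

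The only genuine subtlety is the boundedness step — one must invoke the fact that a $*$-homomorphism from a commutative C*-algebra to $\CC$ is automatically continuous of norm at most one, and then propagate the bound to off-diagonal blocks via the C*-identity $\|x\|^{2}=\|x^{*}x\|$; once this is in place, Tychonoff plus closedness of pointwise algebraic identities finishes the proof with no further work.
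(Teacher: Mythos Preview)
Your proposal is correct and follows essentially the same route as the paper: bound $|\omega(x)|$ by $\|x\|$ via the C*-identity and contractivity of characters on the diagonal blocks, embed $[\Cs;\CC]$ into the Tychonoff product $\prod_{x\in\Cs}\overline{D}(0,\|x\|)$, and conclude by closedness. The paper simply asserts the closedness step (``it is easy to check that it is closed''), whereas you spell out that each $*$-functor axiom is a polynomial (hence closed) condition in finitely many coordinates---this is exactly the intended verification. One cosmetic remark: since a $*$-functor between categories preserves identities, condition (iv) should read $\omega(1_A)=1$ rather than $\omega(1_A)\in\{0,1\}$; this is still a closed condition and changes nothing in the argument.
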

\begin{proof}
Note that for all $\omega\in [\Cs;\CC]$ and for all $x\in \Cs_{AB}$, 
\begin{align*}
|\omega(x)|&=\sqrt{\cj{\omega(x)}\omega(x)}=\sqrt{\omega(x^*x)} =\sqrt{\omega_{AA}(x^*x)}
\leq\sqrt{\|x^*x\|}=\sqrt{\|x\|^2}=\|x\|, 
\end{align*}
because $\omega_{AA}$ is a state over the C*-algebra $\Cs_{AA}$. Hence $[\Cs;\CC]$ is a subspace of the 
compact Hausdorff space $\prod_{x\in \Cs}D_{\|x\|}$, where $D_{\|x\|}$ is the 
closed ball in $\CC$ of radius $\|x\|$, and it is easy to check that it is closed.
\end{proof}

Let $\Sp_b(\Cs):=\{[\omega] \ | \ \omega\in [\Cs;\CC]\}$ denote the \emph{base spectrum} of $\Cs$, 
defined as the set of unitary equivalence classes of $*$-functors in $[\Cs;\CC]$. 
It is a compact space with the quotient topology induced by the map  $\omega\mapsto[\omega]$. 
To show that $\Sp_b(\Cs)$ is Hausdorff it is enough to note that, by proposition~\ref{prop: field}, if $[\omega]\neq[\omega']$, there exists at least one object $A$ such that $\omega_{AA}\neq\omega'_{AA}$ and so there exists at least one evaluation $\ev_x$ with $x\in \Cs_{AA}$ such that $\ev_x(\omega)\neq\ev_x(\omega')$. Since, for $x\in\Cs_{AA}$, $\ev_x$ induces a well-defined map on the quotient space $\Sp_b(\Cs)$ by $[\omega]\mapsto\omega(x)$, the result follows. 

\begin{proposition}\label{prop: bije}
Let $\Cs$ be a full commutative C*-category. 
For all $A\in \Ob_\Cs$, there exists a natural bijective map, between the base spectrum of $\Cs$ and the usual Gel'fand spectrum 
$\Sp(\Cs_{AA})$ of the 
C*-algebra $\Cs_{AA}$, given by the restriction $|_{AA}:\omega\mapsto\omega|_{\Cs_{AA}}$. 

In particular, for all objects $A\in \Ob_\Cs$, one has 
$\Sp_b(\Cs)|_{AA}=\Sp (\Cs_{AA}) \simeq\Sp_b(\Cs_{AA})$.
\end{proposition}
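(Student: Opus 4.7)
The plan is to verify that the restriction map $|_{AA}: [\Cs;\CC] \to \Sp(\Cs_{AA})$ descends to a bijection $\Sp_b(\Cs) \to \Sp(\Cs_{AA})$ on unitary-equivalence classes; the supplementary assertion $\Sp(\Cs_{AA}) \simeq \Sp_b(\Cs_{AA})$ is then trivial, since a one-object C*-category admits no non-trivial unitary natural transformations (all ``phases'' commute with a commutative algebra of endomorphisms). Well-definedness is immediate: any $\omega \in [\Cs;\CC]$ restricts on $\Cs_{AA}$ to a unital $*$-homomorphism $\Cs_{AA} \to \CC$, i.e.~a character, and Proposition~\ref{prop: field} guarantees this restriction depends only on $[\omega]$.

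For injectivity, suppose $\omega_{AA} = \omega'_{AA}$; by Proposition~\ref{prop: field} it suffices to show $\omega_{BB} = \omega'_{BB}$ for every $B \in \Ob_\Cs$. The $*$-functoriality identity applied to any $x \in \Cs_{AB}$ yields
\[
\omega_{BB}(x^*x) = \overline{\omega(x)}\omega(x) = \omega(x)\overline{\omega(x)} = \omega_{AA}(xx^*),
\]
and analogously for $\omega'$. Fullness of $\Cs$ makes $\Cs_{AB}$ an imprimitivity $\Cs_{AA}$-$\Cs_{BB}$-bimodule, so the closed linear span of $\{x^*x \st x \in \Cs_{AB}\}$ is all of $\Cs_{BB}$, and the identity above determines $\omega_{BB}$ uniquely from $\omega_{AA}$.

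For surjectivity, given $\chi \in \Sp(\Cs_{AA})$ I build $\omega$ via a maximal-ideal quotient. Use the Rieffel correspondence for each imprimitivity bimodule $\Cs_{AB}$ to transport $\ke\chi$ to a maximal ideal $J_{BB} \subset \Cs_{BB}$; the $J_{BB}$ are mutually compatible because composition of the $\Hom$-bimodules in $\Cs$ implements composition of Rieffel correspondences. Set $J_{BC} := \{z \in \Cs_{BC} \st z^*z \in J_{CC}\}$; routine checks (using that each $J_{CC}$ is an ideal in a commutative C*-algebra, hence order-hereditary) show that $J := \bigsqcup_{B,C} J_{BC}$ is a closed two-sided $*$-ideal of $\Cs$. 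The diagonal blocks of $\Cs/J$ are $\Cs_{BB}/J_{BB} \cong \CC$, while each off-diagonal $\Cs_{BC}/J_{BC}$ is a non-zero imprimitivity $\CC$-$\CC$-bimodule, hence a one-dimensional Hilbert space, so $\Cs/J$ is a one-dimensional C*-category. Lemma~\ref{cor: one-dim} then furnishes a $*$-functor $\gamma: \Cs/J \to \CC$, and $\omega := \gamma \circ \pi$, with $\pi$ the quotient functor of Theorem~\ref{th: fit}, belongs to $[\Cs;\CC]$ and satisfies $\omega|_{\Cs_{AA}} = \chi$ (since $\ke(\omega|_{\Cs_{AA}}) = J_{AA} = \ke\chi$).

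The main obstacle is the last structural claim: that $\Cs_{BC}/J_{BC}$ is one-dimensional. This is precisely the Hermitian Serre--Swan input which identifies an imprimitivity bimodule between commutative unital C*-algebras with the section space of a Hermitian line bundle, whose fibers at any point of the spectrum are one-dimensional; once this structural fact is granted, the rest of the surjectivity step is bookkeeping.
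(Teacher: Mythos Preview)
Your argument is correct and follows the same overall architecture as the paper's proof: well-definedness via Proposition~\ref{prop: field}, injectivity by showing $\omega_{AA}$ determines all $\omega_{BB}$, and surjectivity by passing to a one-dimensional quotient C*-category and invoking Lemma~\ref{cor: one-dim}.

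Two comparative remarks. For injectivity, the paper invokes the canonical Rieffel isomorphism $\phi_{AB}:\Cs_{BB}\to\Cs_{AA}$ (citing~\cite[Proposition~2.30]{BCL3}) to write $\omega_{BB}=\omega_{AA}\circ\phi_{AB}$; your identity $\omega_{BB}(x^*x)=\omega_{AA}(xx^*)$ is exactly this isomorphism unpacked on generators, together with polarization to pass from $\{x^*x\}$ to $\{x^*y\}$. Your version is more self-contained and avoids the external reference. For surjectivity, the paper simply takes $\Js$ to be the involutive ideal generated by $\ke\chi$ and asserts $\Js_{AA}=\ke\chi$; your block-by-block construction via Rieffel correspondence yields the same ideal with the bookkeeping made explicit.

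Your final paragraph is unnecessary and slightly misleading. You have already argued that $\Cs/J$ is full with diagonal blocks $\CC$, so each $\Cs_{BC}/J_{BC}$ is an imprimitivity $\CC$--$\CC$ bimodule and hence one-dimensional; this is elementary and is precisely how the paper concludes (``Since $\Cs_{AA}/\Js_{AA}$ is one-dimensional, the quotient C*-category $\Cs/\Js$ is one-dimensional''). Invoking Serre--Swan here is overkill: you do not need to realise $\Cs_{BC}$ as sections of a line bundle and then look at a fibre, because you have already passed to the quotient where the base algebras are $\CC$. Drop that paragraph and the proof reads cleanly.
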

\begin{proof}
By proposition~\ref{prop: field}, the correspondence $[\omega]\mapsto \omega_{AA}$ is well defined.

We show that the map $[\omega]\mapsto \omega_{AA}$ is injective. 
Given $\omega,\omega'\in [\Cs;\CC]$ with $\omega_{AA}=\omega'_{AA}$, we know 
from~\cite[Proposition~2.30]{BCL3}, that $\omega_{BB}(x)=\omega_{AA}(\phi_{AB}(x))$, for all $x\in \Cs_{BB}$, for all 
$B\in \Ob_\Cs$, where 
$\phi_{AB}: \Cs_{BB}\to\Cs_{AA}$ is the canonical isomorphism associated to the imprimitivity bimodule $\Cs_{BA}$. 
It follows that $\omega_{BB}=\omega_{AA}\circ\phi_{AB}=\omega'_{AA}\circ\phi_{AB}=\omega'_{BB}$, for all $B\in \Ob_\Cs$ and, by proposition~\ref{prop: field}, we see that $[\omega]=[\omega']$. 

\medskip

We show that the function $[\omega]\mapsto \omega_{AA}$ is surjective. 
Let $\omega^o\in \Sp(\Cs_{AA})$.
Define $\Js$ to be the involutive ideal in $\Cs$ generated by $\ke(\omega^o)$. 
One can see that $\Js \cap \Cs_{AA} = \Js_{AA} = \ke(\omega^o)$.
Since the quotient $*$-functor $\pi: \Cs \to \Cs/\Js$ is bijective on the objects 
and $\Cs$ is full, $\Cs/\Js$ is full. 
Since $\Cs_{AA}/\Js_{AA}$ is one-dimensional, the quotient C*-category $\Cs/\Js$ 
is one-dimensional. 
If $\gamma:\Cs/\Js\to \CC$ is a $\CC$-valued $*$-functor 
as in  lemma~\ref{cor: one-dim}, $\gamma \circ \pi$ restricted to $\Cs_{AA}$
must be $\omega^o$ since it vanishes on $\Js \cap \Cs_{AA}$. 
\end{proof}

\begin{theorem}\label{prop: homo-sp}
Let $\Cs$ be a full commutative C*-category.
For every $A\in \Ob_\Cs$, the bijective map $|_{AA}:\Sp_b(\Cs)\to\Sp(\Cs_{AA})$ given by $[\omega]\mapsto \omega_{AA}$ is a homeomorphism between $\Sp_b(\Cs)$ and the Gel'fand spectrum $\Sp(\Cs_{AA})$ of the unital C*-algebra $\Cs_{AA}$. 
\end{theorem}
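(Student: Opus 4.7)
The bijectivity is already settled by Proposition~\ref{prop: bije}, so what remains is to match the two topologies. My plan is to exploit the classical fact that a continuous bijection from a compact space to a Hausdorff space is automatically a homeomorphism. Thus it suffices to prove that $|_{AA}$ is continuous, since $\Sp_b(\Cs)$ is compact (as a continuous quotient of the compact Hausdorff space $[\Cs;\CC]$) and $\Sp(\Cs_{AA})$ is Hausdorff by standard Gel'fand theory.

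To verify continuity, I would appeal twice to universal properties. First, by the universal property of the quotient topology on $\Sp_b(\Cs)$, the map $|_{AA}:\Sp_b(\Cs)\to\Sp(\Cs_{AA})$ is continuous if and only if the composition $[\Cs;\CC]\to\Sp_b(\Cs)\xrightarrow{|_{AA}}\Sp(\Cs_{AA})$, namely $\omega\mapsto\omega|_{\Cs_{AA}}$, is continuous when $[\Cs;\CC]$ carries the initial topology generated by all evaluations $\ev_y$, $y\in\Cs$. Second, the Gel'fand topology on $\Sp(\Cs_{AA})$ is itself the initial topology for the family of evaluations at elements of $\Cs_{AA}$, so by the universal property of initial topologies it is enough to check that, for every $x\in\Cs_{AA}$, the composite $\omega\mapsto\omega|_{\Cs_{AA}}\mapsto\omega|_{\Cs_{AA}}(x)$ is continuous on $[\Cs;\CC]$. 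But this composite is precisely $\ev_x$, which is continuous by definition of the topology on $[\Cs;\CC]$.

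Combining these two steps, $|_{AA}$ is a continuous bijection from the compact space $\Sp_b(\Cs)$ to the Hausdorff space $\Sp(\Cs_{AA})$, hence a homeomorphism.

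I do not expect any genuine obstacle: the serious content of the statement lies in Proposition~\ref{prop: field} and Proposition~\ref{prop: bije}, which already guarantee that $|_{AA}$ is well-defined and bijective; the topological comparison only requires careful bookkeeping of the universal properties of the quotient topology on $\Sp_b(\Cs)$ and the weak-$*$ topology on $\Sp(\Cs_{AA})$. The only point one needs to be slightly mindful of is that the evaluation $\ev_x$ for $x\in\Cs_{AA}$ descends to a well-defined function on $\Sp_b(\Cs)$, a fact already noted in the proof of the Hausdorffness of $\Sp_b(\Cs)$ preceding the proposition.
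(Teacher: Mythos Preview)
Your proof is correct and follows essentially the same route as the paper: both arguments reduce to showing continuity of $|_{AA}$ (invoking the compact-to-Hausdorff principle), pass through the quotient map to $[\Cs;\CC]$, and then use the universal property of the weak-$*$ topology on $\Sp(\Cs_{AA})$ to reduce to continuity of $\ev_x$ for $x\in\Cs_{AA}$. The only cosmetic difference is that the paper notes both spaces are compact Hausdorff, whereas you correctly observe that compactness of the domain and Hausdorffness of the codomain already suffice.
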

\begin{proof} 
Since both $\Sp_b(\Cs)$ and $\Sp(\Cs_{AA})$ are compact Hausdorff spaces, and the map $\mid_{AA}$ is bijective, it is enough to show that $\mid_{AA}: \Sp_b(\Cs)\to\Sp(\Cs_{AA})$ is continuous. Since $\Sp_b(\Cs)$ is equipped with the quotient topology induced by the projection map $\pi: [\Cs;\CC]\to\Sp_b(\Cs)$, the map $\mid_{AA}$ is continuous if and only if $\mid_{AA}\circ \pi:[\Cs;\CC]\to\Sp(\Cs_{AA})$ is continuous. 
The spaces $[\Cs;\CC]$ and $\Sp(\Cs_{AA})$ are equipped with the weakest topology making the evaluation maps continuous. 
It follows that the continuity of $\mid_{AA}\circ\pi$ is equivalent to the continuity of 
$\ev_x = \ev_x\circ\mid_{AA}\circ\pi:[\Cs;\CC]\to\CC$ for all $x\in \Cs_{AA}$. 
Since $\ev_x:[\Cs;\CC]\to\CC$ is continuous, the result is established.   
\end{proof}

Let $\Xs^\Cs:=\Delta^{\Cs}\times\Rs^{\Cs}$ be the direct product equivalence relation of the compact Hausdorff $*$-category 
$\Delta^\Cs:=\Delta_{\Sp_b(\Cs)}$ and the topologically discrete $*$-category $\Rs^\Cs:=\Cs/\Cs\simeq\Rs_{\Ob_\Cs}$.

With a slight abuse of notation, we write $AB\in \Rs^\Cs$ for the arrow $\Cs_{AB}/\Cs_{AB}$ in $\Rs^\Cs$ and denote  
$([\omega], AB)=([\omega],\Cs_{AB}/\Cs_{AB})\in \Xs^\Cs$ simply by $\omega_{AB}$. 

We define $\Es^\Cs$ as the disjoint union over $[\omega]\in \Delta^\Cs$ of the quotients $\Cs/\Is_\omega$. In formulae: 
\begin{gather*} 
\Es^\Cs_\omega:=\frac{\Cs}{\Is_\omega}, \quad 
\Es^\Cs:=\biguplus_{\omega\in\Delta^\Cs} \Es^\Cs_\omega=\biguplus_{\omega_{AB}\in\Xs^\Cs}\Es^\Cs_{\omega_{AB}}, 
\\ 
\pi^\Cs \colon \Es^\Cs\to \Xs^\Cs, \qquad \pi^\Cs : e\mapsto \omega_{AB}, \quad \forall e\in \Es^\Cs_{\omega_{AB}},  
\end{gather*} 
where $\Es^\Cs_{\omega_{AB}}:=\Cs_{AB}/\Is_{\omega_{AB}}$, with $\Is_{\omega_{AB}}:=\Is_\omega\cap\Cs_{AB}$. 

\begin{proposition}\label{prop: egamma}
The triple $(\Es^\Cs,\pi^\Cs,\Xs^\Cs)$ is naturally equipped with the structure of a unital rank-one Fell bundle over the topological involutive inverse category $\Xs^\Cs$. 
\end{proposition}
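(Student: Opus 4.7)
The plan is to separate the verification into a fiberwise algebraic part and a topological part; the latter carries essentially all of the real work.

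\emph{Fiberwise algebraic structure.} For every $\omega \in [\Cs;\CC]$, the kernel $\ke(\omega) = \Is_\omega$ is a closed two-sided $*$-ideal by Theorem~\ref{th: fit}. It is object-preserving, so $\Is_\omega = \biguplus_{AB} \Is_{\omega_{AB}}$, and the quotient $\Cs/\Is_\omega$ is a C*-category whose $(A,B)$ block is exactly $\Es^\Cs_{\omega_{AB}}$. Since $\Cs$ is full commutative, the argument inside the proof of Proposition~\ref{prop: bije} (fullness of the quotient plus one-dimensionality of every diagonal block $\Cs_{AA}/\ke(\omega_{AA})$, the latter being a quotient of a commutative unital C*-algebra by the kernel of a character) yields that $\Cs/\Is_\omega$ is one-dimensional, i.e.~the rank-one property holds. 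The fiberwise composition $\Es^\Cs_{\omega_{AB}} \times \Es^\Cs_{\omega_{BC}} \to \Es^\Cs_{\omega_{AC}}$ and involution $\Es^\Cs_{\omega_{AB}} \to \Es^\Cs_{\omega_{BA}}$ are inherited from $\Cs/\Is_\omega$, and the algebraic axioms of a unital Fell bundle (fiberwise bilinearity and conjugate linearity, associativity, submultiplicativity, the C*-identity, positivity of $e^*e$, and units $\widehat{1_A}$) follow from the corresponding axioms of each quotient C*-category.

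\emph{Topology.} Each $x \in \Cs_{AB}$ defines a canonical section $\hat x \colon \omega_{AB} \mapsto x + \Is_{\omega_{AB}}$ of $\Es^\Cs$ over $\Delta^\Cs \times \{AB\}$; this is well defined because $\Is_\omega$ depends only on $[\omega]$ (as noted just before Lemma~\ref{lem: principal}). The collection $\{\hat x\}_{x\in\Cs}$ is fiberwise surjective and closed under composition and involution. By the standard Fell--Doran construction~\cite[Section~I.13]{FD}, it therefore induces a unique Banach bundle topology on $\Es^\Cs$ provided that, for every $x$, the norm function $\omega_{AB} \mapsto \|\hat x(\omega_{AB})\|$ is upper semicontinuous. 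This is the crux of the proof.

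\emph{Upper semicontinuity and conclusion.} For a diagonal element $x \in \Cs_{AA}$, the one-dimensional quotient $\Cs_{AA}/\Is_{\omega_{AA}}$ is isomorphic to $\CC$ via $[y] \mapsto \omega(y)$, so $\|\hat x(\omega_{AA})\| = |\omega(x)| = |\ev_x(\omega)|$, which is continuous by the very definition of the topology on $[\Cs;\CC]$ and hence, through Theorem~\ref{prop: homo-sp}, on $\Delta^\Cs = \Sp_b(\Cs)$. For a general $x \in \Cs_{AB}$, the C*-identity applied inside the C*-category $\Cs/\Is_\omega$ gives $\|\hat x(\omega_{AB})\|^2 = \|\widehat{x^*x}(\omega_{AA})\| = |\omega(x^*x)|$, reducing the off-diagonal case to the diagonal one. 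With the topology in place, continuity of multiplication and involution on $\Es^\Cs$ follows from the identities $\widehat{xy} = \hat x \cdot \hat y$ and $\widehat{x^*} = (\hat x)^*$ at the level of sections, combined with the standard density argument for operations on Banach bundles. The main obstacle is precisely the upper semicontinuity step: the squaring trick reduces the off-diagonal case to a scalar evaluation, but it crucially depends on identifying the fiberwise norm in $\Cs/\Is_\omega$ with $|\omega(\cdot)|$, for which the full commutativity of $\Cs$ (through Proposition~\ref{prop: bije}) is essential; everything else amounts to transcribing identities available in each quotient C*-category $\Cs/\Is_\omega$ across the total space.
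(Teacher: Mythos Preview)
Your proof is correct and follows essentially the same route as the paper's: both equip $\Es^\Cs$ with the Banach bundle topology determined by the Gel'fand-transform sections $\hat x$, and both obtain the rank-one property from the one-dimensionality of each quotient C*-category $\Cs/\Is_\omega$. The only difference is presentational: the paper writes down the neighborhood base induced by the sections $\hat x$ directly and then asserts continuity of the operations, whereas you invoke the Fell--Doran construction theorem and explicitly verify its key hypothesis, namely continuity of $[\omega]\mapsto\|\hat x(\omega_{AB})\|=|\omega(x^*x)|^{1/2}$, a step the paper leaves implicit.
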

\begin{proof}
Define on $\Es^\Cs$ the topology whose fundamental system of neighborhoods are the sets 
$U_{e_0}^{O,\, x_0,\, \varepsilon}:=\{e\in\Es^\Cs  \mid \ \pi^\Cs(e)\in O, \ \exists x\in\Cs : \hat{x}(\pi^\Cs(e))=e, \; \|\hat{x}-\hat{x}_0\|_\infty <\varepsilon\}$, where $e_0\in\Es^\Cs$, $O$ is open in $\Xs^\Cs$, $\varepsilon>0$, $x_0\in \Cs$ with 
$\hat{x}_0(\pi^\Cs(e_0))=e_0$ and where $\hat{x}$ denotes the Gel'fand transform of $x$ defined in section~\ref{sec: gel}. 
This topology entails that a net $(e_\mu)$ is convergent to the point $e$ in $\Es^\Cs$ if and 
only if the net $\pi^\Cs(e_\mu)$ converges to $\pi^\Cs(e)$ in $\Xs^\Cs$ and, 
for all possible Gel'fand transforms $\hat{x}_0$ ``passing'' through $e_0$, 
there exists a net of Gel'fand transforms $\hat{x}_\mu$, ``passing'' through $e_\mu$, 
that uniformly converges, on every neighborhood of $\pi^\Cs(e_0)$, to $\hat{x}_0$. 

With such a topology the (partial) operations on $\Es^\Cs$ i.e.~sum, scalar multiplication, product, involution, inner product (and hence norm) become continuous and $(\Es^\Cs,\pi^\Cs,\Xs^\Cs)$ becomes a Banach bundle. 

Since every sub-equivalence relation of $\Xs^\Cs$ is a disjoint union of ``grids'' $\{[\omega]\}\times\Rs^\Cs$ whose inverse image under $\pi^\Cs$ is the one-dimensional C*-category $\Cs/\ke(\omega)$, $(\Es^\Cs,\pi^\Cs,\Xs^\Cs)$ is a rank-one unital Fell bundle over the equivalence relation $\Xs^\Cs$ and hence a spaceoid.
\end{proof}

To a commutative full C*-category $\Cs$ we have associated a topological \emph{spectral spaceoid} 
$\Sigma(\Cs) := (\Es^\Cs,\pi^\Cs,\Xs^\Cs)$. 
We extend now the definition of $\Sigma$ to the morphisms of $\Af$. 
Let $\Phi:\Cs\to\Ds$ be an object-bijective $*$-functor between two small commutative full C*-categories with spaceoids $\Sigma(\Cs),\Sigma(\Ds)\in \Tf$ and define a morphism $\Sigma^\Phi:\Sigma(\Ds)\xrightarrow{(\lambda^\Phi,\Lambda^\Phi)}\Sigma(\Cs)$ in the category $\Tf$ as follows.  

\medskip

We set
$\lambda^\Phi:\Xs^\Ds\xrightarrow{(\lambda^\Phi_\Delta,\lambda^\Phi_\Rs)}\Xs^\Cs$ where $\lambda^\Phi_\Rs:\Rs^\Ds\to\Rs^\Cs$ is the isomorphism of equivalence relations given by $\lambda^\Phi_\Rs(AB):=\Phi^{-1}(A)\Phi^{-1}(B)$, for $AB \in \Rs^\Ds$, and where $\lambda^\Phi_\Delta:\Delta^\Ds\to\Delta^\Cs$ (since $\omega\mapsto \omega\circ \Phi$ is continuous and preserves equivalence by unitary natural transformations) is the well-defined continuous map given by 
$\lambda^\Phi_\Delta([\omega]):=[\omega\circ\Phi] \in \Delta^\Cs$, for all $[\omega]\in \Delta^\Ds$. 

\medskip

Since, for $\omega\in [\Ds;\CC]$, the $*$-functor $\Phi:\Cs\to\Ds$ induces a continuous field of quotient $*$-functors  
$\Phi_\omega:\Cs/\Is_{\Phi\circ\omega}\to \Ds/\Is_\omega$ between one-dimensional C*-categories, we can define\footnote{Note that $(\lambda^\Phi)^\bullet(\Es^\Cs)$ is the disjoint union of the continuous field of one-dimensional C*-categories $\Cs/\Is_{\Phi\circ\omega}$.} $\Lambda^\Phi: (\lambda^\Phi)^\bullet(\Es^\Cs)\to \Es^\Ds$ as the disjoint union of the $*$-functors $\Phi_\omega$, for $[\omega]\in \Delta^\Ds$ and note that it is a continuous fiberwise linear 
$*$-functor. 

\begin{proposition}
For any morphism $\Cs\xrightarrow{\Phi}\Ds$ in $\Af$, the map $\Sigma(\Ds)\xrightarrow{\Sigma^\Phi}\Sigma(\Cs)$ is a morphism of spectral spaceoids.  
The pair of maps $\Sigma:\Cs\mapsto\Sigma(\Cs)$ and $\Sigma: \Phi\mapsto\Sigma^\Phi$ gives a contravariant functor $\Sigma:\Af\to\Tf$, from the category $\Af$ of object-bijective $*$-functors between small commutative full C*-categories to the category $\Tf$ of spaceoids. 
\end{proposition}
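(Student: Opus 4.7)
The plan is to verify three things in turn: (a) that $\Sigma^\Phi = (\lambda^\Phi, \Lambda^\Phi)$ really is a morphism in $\Tf$; (b) that $\Sigma$ preserves identities; and (c) that $\Sigma$ reverses composition. Everything hinges on unwinding the definitions carefully and invoking Theorem~\ref{th: fit} together with the universal properties of pull-backs.

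For (a), I would first check that $\lambda^\Phi$ is a morphism of base categories. Object-bijectivity of $\Phi$ makes $\lambda^\Phi_\Rs : AB \mapsto \Phi^{-1}(A)\Phi^{-1}(B)$ an honest isomorphism of the discrete equivalence relations. Continuity of $\lambda^\Phi_\Delta : [\omega]\mapsto[\omega\circ\Phi]$ is the key point: it is enough to check continuity before passing to the quotient, and there the pre-composition $\omega \mapsto \omega\circ\Phi$ is continuous because, by definition of the weak topology on $[\Ds;\CC]$ and $[\Cs;\CC]$, each evaluation $\ev_x\circ(-\circ\Phi) = \ev_{\Phi(x)}$ is continuous. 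One then passes to the quotient via Proposition~\ref{prop: field} (unitary equivalence is preserved by composing a natural transformation with $\Phi$). Next I would verify that for each $[\omega]\in\Delta^\Ds$, the map $\Phi$ descends to a $*$-functor $\Phi_\omega : \Cs/\Is_{\omega\circ\Phi}\to\Ds/\Is_\omega$: by the definition of $\Is_{\omega\circ\Phi}$, $\Phi$ kills this ideal modulo $\Is_\omega$, so Theorem~\ref{th: fit} produces a unique $\Phi_\omega$, and this $\Phi_\omega$ is fiberwise linear and $*$-preserving. Assembling these over the base gives $\Lambda^\Phi$, and the identity $\pi^\Ds\circ\Lambda^\Phi = (\pi^\Cs)^{\lambda^\Phi}$ is immediate from how the fibers are indexed. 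Continuity of $\Lambda^\Phi$ in the bundle topology of Proposition~\ref{prop: egamma} follows because Gel'fand transforms $\hat{x}$ in $\Cs$ are mapped by $\Lambda^\Phi$ to (restrictions of) Gel'fand transforms $\widehat{\Phi(x)}$ in $\Ds$, and convergence in the fundamental neighborhoods $U_{e_0}^{O,x_0,\varepsilon}$ is thereby preserved.

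For (b), when $\Phi=\iota_\Cs$, then $\lambda^{\iota_\Cs}_\Rs$ and $\lambda^{\iota_\Cs}_\Delta$ are both identities, so $\lambda^{\iota_\Cs}=\iota_{\Xs^\Cs}$. Each $\Phi_\omega$ becomes the identity on $\Cs/\Is_\omega$, hence $\Lambda^{\iota_\Cs}$ agrees with the canonical identification $\iota_{\Xs^\Cs}^{\pi^\Cs} : (\iota_{\Xs^\Cs})^\bullet(\Es^\Cs)\to\Es^\Cs$ appearing in the definition of identities in $\Tf$.

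For (c), given $\Cs\xrightarrow{\Phi}\Ds\xrightarrow{\Psi}\Es$, I would compute both sides of $\Sigma^{\Psi\circ\Phi}=\Sigma^\Phi\circ\Sigma^\Psi$ on the base and on the total space. On the base, $\lambda^{\Psi\circ\Phi}_\Rs(AB) = (\Psi\circ\Phi)^{-1}(A)(\Psi\circ\Phi)^{-1}(B) = \lambda^\Phi_\Rs(\lambda^\Psi_\Rs(AB))$, and $\lambda^{\Psi\circ\Phi}_\Delta([\omega]) = [\omega\circ\Psi\circ\Phi] = \lambda^\Phi_\Delta(\lambda^\Psi_\Delta([\omega]))$. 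On the fibers, uniqueness in Theorem~\ref{th: fit} gives $(\Psi\circ\Phi)_\omega = \Phi_{\omega\circ\Psi}\circ\Psi_\omega$. What remains is the bookkeeping with the natural isomorphism $\Theta^{\Es^\Cs}_{\lambda^\Phi,\lambda^\Psi}$ of iterated pull-backs occurring in the definition of composition in $\Tf$; this is a diagram chase that I expect to be the main obstacle, not because anything conceptual is at stake, but because one must align the two descriptions $(\lambda^\Phi\circ\lambda^\Psi)^\bullet(\Es^\Cs)$ and $(\lambda^\Psi)^\bullet((\lambda^\Phi)^\bullet(\Es^\Cs))$ carefully and then commute $\Lambda^\Phi$ past the pull-back functor applied to $\Lambda^\Psi$. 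The authors themselves acknowledge such verifications as tedious but straightforward, and I would similarly defer them after establishing all the universal-property ingredients.
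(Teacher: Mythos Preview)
Your plan is correct and matches the paper's own (much terser) argument, which likewise reduces functoriality to checking $\lambda^{\Psi\circ\Phi}=\lambda^\Phi\circ\lambda^\Psi$ and $\Lambda^{\Psi\circ\Phi}=\Lambda^\Psi\circ(\lambda^\Psi)^\bullet(\Lambda^\Phi)\circ\Theta^{\Es^{\Cs}}_{\lambda^\Phi,\lambda^\Psi}$, together with the identity case. One small slip: in part~(c) your fiberwise formula should read $(\Psi\circ\Phi)_\omega=\Psi_\omega\circ\Phi_{\omega\circ\Psi}$ (and correspondingly it is $(\lambda^\Psi)^\bullet(\Lambda^\Phi)$, not the pull-back of $\Lambda^\Psi$, that appears in the composite), since $\Phi_{\omega\circ\Psi}:\Cs/\Is_{\omega\circ\Psi\circ\Phi}\to\Ds/\Is_{\omega\circ\Psi}$ must be applied before $\Psi_\omega:\Ds/\Is_{\omega\circ\Psi}\to\Es/\Is_\omega$ for the types to match.
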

\begin{proof}
We have to prove that $\Sigma$ is antimultiplicative and preserves the identities.  

If $\Phi:\Cs_1\to\Cs_2$ and $\Psi:\Cs_2\to\Cs_3$ are two $*$-functors in $\Af$, by definition, 
\[ 
 \Sigma^{\Psi\circ\Phi}=(\lambda^{\Psi\circ\Phi},\Lambda^{\Psi\circ\Phi})=\Big(\lambda^\Phi\circ\lambda^\Psi, \  \Lambda^\Psi\circ(\lambda^\Psi)^\bullet(\Lambda^\Phi) \circ
\Theta_{\lambda^\Phi, \lambda^\Psi}^{\Es^{\Cs_1}} \Big) 
=(\lambda^\Phi,\Lambda^\Phi)\circ(\lambda^\Psi,\Lambda^\Psi)
=\Sigma^{\Phi}\circ\Sigma^{\Psi}. 
\]
Also, if $\iota_\Cs:\Cs\to\Cs$ is the identity functor of the C*-category $\Cs$, then the morphism \hbox{$\Sigma^{\iota_\Cs}=(\lambda^{\iota_\Cs},\Lambda^{\iota_\Cs})$} is the identity morphism of the spaceoid $\Sigma (\Cs)$. 
\end{proof} 

\section{Horizontal Categorification of Gel'fand Duality}\label{sec: gelfand}

\subsection{Gel'fand Transform}\label{sec: gel}
For a given C*-category $\Cs$ in $\Af$, we define a horizontally categorified version of \emph{Gel'fand transform} as 
$\Gg_\Cs: \Cs\to\Gamma(\Sigma(\Cs))$ given by $\Gg_\Cs: x\mapsto\hat{x}\quad \text{where} \quad \hat{x}_{[\omega]}:=x+\Is_{\omega_{AB}}$,  for all $x\in \Cs_{BA}$. 
Clearly $\Gg_\Cs:\Cs\to\Gamma(\Sigma(\Cs))$ is an object bijective $*$-functor. 

\begin{lemma}\label{lem: subcat} 
Let $\Cs$ be a commutative C*-category and $\Cs^o$ a subcategory of $\Cs$ which is a full C*-category such that 
$\Cs^o_{AA}=\Cs_{AA}$ for all $A\in \Ob_\Cs=\Ob_{\Cs^o}$. 
Then $\Cs^o_{AB}=\Cs_{AB}$ for all $A,B\in \Ob_\Cs$. 
\end{lemma}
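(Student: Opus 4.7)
The plan is to pick an arbitrary $z\in\Cs_{AB}$ and write $z=1_{A}\cdot z$, where $1_A$ is the unit of $\Cs_{AA}$, after first expressing $1_A$ as a \emph{finite} sum of ``left inner products'' of elements of $\Cs^o_{AB}$. The argument is built from two standard facts only: on any hom-bimodule of a C*-category the bimodule inner products are intrinsic, given by $\langle x\mid y\rangle_L=xy^*$ and $\langle x\mid y\rangle_R=x^*y$; and invertible elements form an open neighborhood of the unit in a unital C*-algebra.

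First, because $\Cs^o$ is a subcategory closed under composition and involution, each $\Cs^o_{AB}$ is a (norm-closed, since the C*-norm is intrinsic) Hilbert sub-$(\Cs_{AA},\Cs_{BB})$-bimodule of $\Cs_{AB}$. Since $\Cs^o$ is itself a full C*-category and $\Cs^o_{AA}=\Cs_{AA}$, $\Cs^o_{BB}=\Cs_{BB}$, the bimodule $\Cs^o_{AB}$ is an imprimitivity bimodule between $\Cs_{AA}$ and $\Cs_{BB}$. In particular
\[
\overline{\operatorname{span}}\{xy^{*}\mid x,y\in\Cs^o_{AB}\}=\Cs_{AA}.
\]

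Second, I would exploit unitality of $\Cs_{AA}$: pick a finite sum $u=\sum_{i=1}^{n}x_iy_i^{*}$ with $x_i,y_i\in\Cs^o_{AB}$ and $\|1_A-u\|<1$, so that $u$ is invertible in $\Cs^o_{AA}=\Cs_{AA}$. Setting $x_i':=u^{-1}x_i\in\Cs^o_{AB}$ (using that $\Cs^o$ is closed under the left $\Cs^o_{AA}$-action), we get
\[
\sum_{i=1}^{n}x_i'\,y_i^{*}=u^{-1}u=1_A.
\]
Now for any $z\in\Cs_{AB}$ I compute
\[
z=1_A\cdot z=\sum_{i=1}^{n}x_i'\,(y_i^{*}z),
\]
and since $y_i^{*}z\in\Cs_{BB}=\Cs^o_{BB}$ and $\Cs^o$ is closed under composition, each summand lies in $\Cs^o_{AB}$; hence $z\in\Cs^o_{AB}$, proving $\Cs^o_{AB}=\Cs_{AB}$.

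The only delicate point is extracting the \emph{finite} resolution $\sum x_i'y_i^{*}=1_A$; everything else is bookkeeping inside the subcategory. Note that commutativity of $\Cs$ plays no direct role in this argument: the result rests on fullness of $\Cs^o$ combined with unitality of the diagonal blocks, which is already built into the definition of a C*-category.
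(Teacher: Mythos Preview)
Your argument is correct and follows essentially the same route as the paper: resolve the identity using fullness of $\Cs^o_{AB}$ and then absorb an arbitrary $z\in\Cs_{AB}$ into the diagonal block $\Cs_{AA}=\Cs^o_{AA}$ (the paper does the mirror image, resolving $\iota_B$ via right inner products and writing $x=x\cdot\iota_B$). The only difference is that the paper writes $\iota_B=\sum_{j=1}^\infty u_j^*v_j$ as an infinite series, whereas your invertibility trick yields a \emph{finite} resolution $1_A=\sum_{i=1}^n x_i' y_i^*$, which is a cleaner way to justify the same step.
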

\begin{proof}
By the fullness of the bimodule ${}_\As\Cs^o_\Bs$ there is a sequence of pairs $u_j,v_j\in {}_\As\Cs^o_\Bs$ such that 
$\iota_\Bs=\sum_{j=1}^\infty u_j^*v_j$. 
We have 
$x=x\iota_\Bs=x\sum_{j=1}^\infty u^*_jv_j=\sum_{j=1}^\infty (xu^*_j)v_j \in {}_\As\Cs^o_\Bs$ for all $x\in {}_\As\Cs_\Bs$, 
because $xu^*_j\in {}_\As\Cs_\As={}_\As\Cs^o_\As$ and so $(xu^*_j)v_j\in {}_\As\Cs^o_\Bs$ for all $j$.
\end{proof}

\begin{theorem}\label{th: rep} 
The Gel'fand transform $\Gg_\Cs:\Cs\to\Gamma(\Sigma(\Cs))$ of a commutative full C*-category $\Cs$ is a full faithful isometric 
$*$-functor.  
\end{theorem}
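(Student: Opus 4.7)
The plan is to address isometry first, derive faithfulness from it, and then obtain fullness by combining a diagonal--blocks computation with Lemma~\ref{lem: subcat}.

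\emph{Isometry.} Fix $x\in\Cs_{BA}$. By the very construction of $\Gamma(\Sigma(\Cs))$,
\[
\|\Gg_\Cs(x)\|=\sup_{[\omega]\in\Sp_b(\Cs)}\|\hat{x}_{[\omega]}\|_{\Es^\Cs}.
\]
Each fiber $\Cs_{BA}/\Is_{\omega_{BA}}$ is a one-dimensional bimodule over the one-dimensional diagonal quotients $\Cs_{AA}/\Is_{\omega_{AA}}$ and $\Cs_{BB}/\Is_{\omega_{BB}}$ (Proposition~\ref{prop: egamma}), so the C*-identity in the quotient fiber reduces the computation to the positive element $x^*x\in\Cs_{AA}$. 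Via the character $\omega|_{\Cs_{AA}}$, the diagonal quotient $\Cs_{AA}/\Is_{\omega_{AA}}$ is identified with $\CC$, yielding $\|\hat{x}_{[\omega]}\|^2=\omega(x^*x)$. Combining Theorem~\ref{prop: homo-sp} (which transports the supremum from $\Sp_b(\Cs)$ to $\Sp(\Cs_{AA})$) with the classical Gel'fand norm formula on the commutative C*-algebra $\Cs_{AA}$,
\[
\|\Gg_\Cs(x)\|^2=\sup_{[\omega]}\omega(x^*x)=\sup_{\nu\in\Sp(\Cs_{AA})}\nu(x^*x)=\|x^*x\|=\|x\|^2.
\]

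\emph{Faithfulness} is then immediate from isometry.

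\emph{Fullness.} Set $\Cs^o:=\Gg_\Cs(\Cs)$. Since $\Gg_\Cs$ is an isometric $*$-functor of C*-categories, $\Cs^o$ is automatically a closed $*$-subcategory of $\Gamma(\Sigma(\Cs))$; moreover it is full, for a sequence $\sum_j u_j^*v_j=\iota_A$ witnessing fullness of the bimodule $\Cs_{BA}$ maps to $\sum_j \Gg_\Cs(u_j)^*\Gg_\Cs(v_j)=\iota_A$ in $\Cs^o$. For the diagonal blocks, classical Gel'fand duality applied to $\Cs_{AA}$ gives $\Cs^o_{AA}\simeq C(\Sp(\Cs_{AA});\CC)$; on the other hand, the restriction of $\Sigma(\Cs)$ to $\Xs^\Cs_{AA}$ is a trivial line bundle over $\Sp_b(\Cs)\simeq\Sp(\Cs_{AA})$ whose fiber at $[\omega]$ is identified with $\CC$ via $\omega|_{\Cs_{AA}}$, so that $\Gamma(\Sigma(\Cs))_{AA}\simeq C(\Sp(\Cs_{AA});\CC)$ as well. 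Hence $\Cs^o_{AA}=\Gamma(\Sigma(\Cs))_{AA}$ for every $A\in\Ob_\Cs$, and Lemma~\ref{lem: subcat} forces $\Cs^o_{AB}=\Gamma(\Sigma(\Cs))_{AB}$ for all $A,B$, i.e.~$\Gg_\Cs$ is full.

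\emph{Main obstacle.} The delicate point is that the whole argument relies on matching the fiberwise algebraic quotient picture with the bundle topology defined on $\Es^\Cs$ in Proposition~\ref{prop: egamma}: one must know that the Gel'fand transforms $\hat{x}$ are genuinely continuous sections (so that the pointwise supremum above is the correct bundle norm) and that the trivialization of the diagonal sub-bundles via characters is continuous, so that $\Gamma(\Sigma(\Cs))_{AA}$ really is $C(\Sp(\Cs_{AA});\CC)$. Once these topological identifications are in place, the proof is the clean combination of the diagonal classical Gel'fand theorem with the fullness-transfer Lemma~\ref{lem: subcat}.
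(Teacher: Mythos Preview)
Your proof is correct and follows essentially the same route as the paper: both reduce every claim to the diagonal blocks via the homeomorphism $[\omega]\mapsto\omega|_{\Cs_{AA}}$ (Theorem~\ref{prop: homo-sp}), invoke classical Gel'fand there, and then use Lemma~\ref{lem: subcat} to propagate fullness off the diagonal. The only cosmetic difference is ordering---the paper proves faithfulness first (via $\widehat{x^*x}=0\Rightarrow x^*x=0$), then fullness, then isometry---whereas you do isometry first and read faithfulness off it; your explicit remark that the image is closed (by isometry) and full (by transporting an approximate-unit--type sum) makes precise what the paper leaves to the phrase ``clearly a commutative full C*-category on its own.''
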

\begin{proof} 
The proof follows from the fact that the Gel'fand transform $\Gg_\Cs$, when restricted to any ``diagonal'' commutative unital 
C*-algebra $\Cs_{AA}$ can be ``naturally identified'' with the usual Gel'fand transform of $\Cs_{AA}$ via the homeomorphism 
$[\omega]\mapsto \omega|_{AA}$ (see proposition~\ref{prop: bije} and theorem~\ref{prop: homo-sp}).  

To prove the faithfulness of $\Gg_\Cs$, let $x\in \Cs_{BA}$ with $\widehat{x}=0$. 
We have $\widehat{x^*x}=\widehat{x}^*\widehat{x}=0$ so that the usual Gel'fand transform of $x^*x\in \Cs_{AA}$ is zero and, by Gel'fand isomorphism theorem applied to the C*-algebra $\Cs_{AA}$, we have $x^*x=0$ and hence $x=0$. 

The ``image'' $\Gg_\Cs(\Cs)$ of $\Gg_\Cs$ is a subcategory of the commutative full C*-category $\Gamma(\Sigma(\Cs))$ that is clearly a commutative full C*-category on its own. 
By lemma~\ref{lem: subcat}, the $*$-functor $\Gg_\Cs$ is full as long as $\Gg_\Cs(\Cs_{AA})=\Gamma(\Sigma(\Cs))_{AA}$, for all objects $A\in \Ob_\Cs$ and this follows again by the usual Gel'fand isomorphism theorem applied to the C*-algebra $\Cs_{AA}$.  

For the isometry of $\Gg_\Cs$ we note that for all $x\in \Cs_{BA}$, since the Gel'fand transform restricted to the C*-algebra 
$\Cs_{AA}$ is isometric, we have $\|\Gg_\Cs(x)\|^2=\|\widehat{x^*x}\|=\|x^*x\|=\|x\|^2$. 
\end{proof} 

\subsection{Evaluation Transform}

Given a topological spaceoid $(\Es,\pi,\Xs)$, we define a horizontally categorified version of \emph{evaluation transform}
$\Eg_\Es: (\Es,\pi,\Xs)\xrightarrow{(\eta^\Es,\Omega^\Es)}\Sigma(\Gamma(\Es))$ as follows: 
\begin{itemize}
\item 
$\eta^\Es_\Rs:\Rs_\Os\to\Rs^{\Gamma(\Es)}$ is the canonical isomorphism $\Rs_\Os=\Rs_{\Ob_{\Gamma(\Es)}}\simeq\Gamma(\Es)/\Gamma(\Es)$, explicitly:  
$\eta^\Es_\Rs(AB):=\Gamma(\Es)_{AB}/\Gamma(\Es)_{AB}, \quad \forall AB\in \Rs_\Os$ that is, according to the running notation, written as an identity map $\eta^\Es_\Rs(AB)=AB\in \Rs^{\Gamma(\Es)}$. 
\item 
$\eta^\Es_\Delta: \Delta_X \to \Delta^{\Gamma(\Es)}$ is given by 
$\eta^\Es_\Delta: p\mapsto [\gamma_p\circ\ev_p]\quad \forall p\in \Delta_X$, where the evaluation map 
$\ev_p: \Gamma(\Es)\to \uplus_{(AB)\in\Rs_\O} \ \Es_{p_{AB}}$ given by 
$\ev_p: \sigma\mapsto \sigma_p$ is a $*$-functor with values in a one-dimensional C*-category that 
determines\footnote{By lemma~\ref{cor: one-dim}, there is always a $\CC$-valued $*$-functor 
$\gamma_p:\Es_p\to\CC$ and by proposition~\ref{prop: field} any two compositions of $\ev_p$ with such $*$-functors are unitarily equivalent because they coincide on the diagonal C*-algebras $\Es_{p_{AA}}$.} a unique point  
$[\gamma_p\circ\ev_p]\in \Delta_{\Sp_b(\Gamma(\Es))}$. 
\end{itemize}

\begin{itemize}
\item
$\Omega^\Es \colon (\eta^\Es)^\bullet(\Es^{\Gamma(\Es)})\to \Es$\, is defined by 
$\Omega^\Es \colon (p_{AB}, \ \sigma+\Is_{\eta^\Es(p_{AB})})
\mapsto \sigma_p$, \,$\forall \sigma \in \Gamma(\Es)_{AB}$, $\forall p_{AB}\in\Xs$. 
\end{itemize}

In particular, with such definitions we can prove that the spectrum functor is representative: 
\begin{theorem} 
The evaluation transform $\Eg_\Es :(\Es,\pi,\Xs)\to\Sigma(\Gamma(\Es))$, for all spaceoids $(\Es,\pi,\Xs)$, is an isomorphism in the category of spaceoids.   
\end{theorem}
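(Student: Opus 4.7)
The strategy is to check that $\Eg_\Es=(\eta^\Es,\Omega^\Es)$ is an isomorphism in $\Tf$ by verifying separately that each of its three constituents, the equivalence-relation component $\eta^\Es_\Rs$, the base-space component $\eta^\Es_\Delta$, and the fiberwise $*$-functor $\Omega^\Es$, is an isomorphism of the appropriate kind; the inverse in $\Tf$ will then be assembled from the pointwise inverses by reversing the pull-back construction in the same way it is done for ordinary Hermitian line bundles.

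The component $\eta^\Es_\Rs$ is defined as the canonical identification $\Rs_\Os\simeq \Gamma(\Es)/\Gamma(\Es)=\Rs^{\Gamma(\Es)}$ of the discrete equivalence relations and is therefore manifestly an isomorphism. For $\eta^\Es_\Delta: X\to \Sp_b(\Gamma(\Es))$ I would first use proposition~\ref{prop: bije} together with theorem~\ref{prop: homo-sp} to identify $\Sp_b(\Gamma(\Es))$ with $\Sp(\Gamma(\Es)_{AA})$ for any fixed object $A$. Since the diagonal fiber $\Es_{p_{AA}}$ is a one-dimensional unital C*-algebra, hence canonically $\CC$, the diagonal block $\Gamma(\Es)_{AA}$ is naturally isomorphic to $C(X;\CC)$, and under this identification $\eta^\Es_\Delta$ coincides with the point-evaluation map $p\mapsto \ev_p$, which is exactly the usual Gel'fand homeomorphism. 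This settles that $\eta^\Es$ is an isomorphism in the base category.

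For $\Omega^\Es$ the argument is carried out fiber by fiber, at each point $p_{AB}\in\Xs$. Well-definedness on the quotient $\Gamma(\Es)_{AB}/\Is_{\eta^\Es(p_{AB})}$ is immediate because $\Is_{\eta^\Es(p_{AB})}$ consists of sections annihilated by $\gamma_p\circ\ev_p$ on the whole C*-category, and a section $\sigma\in\Gamma(\Es)_{AB}$ belongs to this ideal precisely when $\sigma_p=0$ (since $\gamma_p$ is an isomorphism of the one-dimensional fiber $\Es_{p_{AB}}$ onto $\CC$); this simultaneously gives injectivity. Surjectivity onto $\Es_{p_{AB}}$ follows from the standard fact about Banach bundles over compact Hausdorff base spaces that every fiber element is hit by a continuous section, which in the rank-one locally trivial case reduces to local trivializations. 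Complex-linearity and the $*$-functor properties are automatic since both quotient and evaluation preserve the algebraic operations in the Fell bundle.

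The main obstacle is the topological one: showing that the fiberwise bijection $\Omega^\Es$, together with its inverse, is continuous, so as to yield a homeomorphism of the total spaces $(\eta^\Es)^\bullet(\Es^{\Gamma(\Es)})$ and $\Es$. Continuity of $\Omega^\Es$ itself is direct because point-evaluation of sections is continuous in any Banach bundle. For the continuity of the inverse I would use the very description of the topology of $\Es^{\Gamma(\Es)}$ given in proposition~\ref{prop: egamma}: a net $e_\mu$ converges to $e$ in $\Es^{\Gamma(\Es)}$ exactly when the base points converge and one can produce a net of Gel'fand transforms $\widehat{\sigma}_\mu$ of sections $\sigma_\mu\in\Gamma(\Es)$ passing through $e_\mu$ that converges uniformly to a Gel'fand transform passing through $e$; under the identifications of the previous step, these Gel'fand transforms are identified with the sections $\sigma_\mu$ themselves, so convergence in $\Es^{\Gamma(\Es)}$ translates to the familiar convergence in $\Es$ expressed through continuous sections. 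Combining this with the already established homeomorphism $\eta^\Es_\Delta$ closes the argument and shows that $\Eg_\Es$ admits a two-sided inverse in $\Tf$.
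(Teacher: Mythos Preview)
Your proposal is correct and follows essentially the same route as the paper's proof: both treat $\eta^\Es_\Rs$ as trivial, reduce $\eta^\Es_\Delta$ to the classical Gel'fand evaluation homeomorphism on the diagonal block $\Gamma(\Es)_{AA}\simeq C(X)$ via proposition~\ref{prop: bije} and theorem~\ref{prop: homo-sp}, and then establish that $\Omega^\Es$ is a fiberwise bijection whose continuity in both directions must be checked against the bundle topologies.

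The one noteworthy difference lies in how the two continuity statements are handled. For the continuity of $\Omega^\Es$ your justification (``point-evaluation of sections is continuous'') is a little quick, since the domain carries the specific topology constructed in proposition~\ref{prop: egamma} rather than a generic Banach-bundle topology; the paper unpacks this with an explicit net argument using that very description. For the inverse, you attempt a direct verification via the same topology description, whereas the paper instead observes that $\Omega^\Es$ is a fiberwise isometry and invokes \cite[Proposition~13.17]{FD}, which guarantees that a continuous norm-preserving Banach-bundle map is automatically open. The isometry shortcut is cleaner and sidesteps the step your sketch leaves implicit, namely producing, for an arbitrary convergent net $e_\mu\to e$ in $\Es$, a \emph{uniformly} convergent net of sections passing through the $e_\mu$.
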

\begin{proof} 
Note that $(\Es_{AA},\pi,X)$ is naturally isomorphic to the trivial $\CC$-bundle over $X$ and thus there is an isomorphism of the C*-algebras $\Gamma(\Es)_{AA}$ and $C(X)$ that ``preserves'' evaluations. 

Clearly the map $\zeta_A: \Delta_X\to \Sp(\Gamma(\Es)_{AA})$ given by  
$\zeta_A(p):= |_{AA}\circ\eta^\Es_\Delta(p)=\gamma_{p_{AA}}\circ\ev_{p_{AA}}$, 
coincides with the usual Gel'fand evaluation homeomorphism for the diagonal C*-algebra $\Gamma(\Es)_{AA}$   
and hence, by proposition~\ref{prop: bije}, $\eta^\Es_\Delta=|_{AA}^{-1}\circ\zeta_A$ is also a homeomorphism. 

\medskip

For every element $e\in \Es$, we have $\pi(e)\in\Delta_X\times\Rs_\Os$ and, since a spaceoid is actually a vector bundle, it is always possible to find a section $\sigma\in \Gamma(\Es)$ such that $\sigma_{\pi(e)}=e$. For any such section we consider the element $\sigma+\Is_{\eta^\Es(\pi(e))}\in\Gamma(\Es)/\Is_{\eta^\Es(\pi(e))}=:\Es^{\Gamma(\Es)}_{\eta^\Es(\pi(e))}$ (note that the element does not depend on the choice of $\sigma\in\Gamma(\Es)$ such that $\sigma_{\pi(e)}=e$) and in this way we have a map 
$\Theta:\Es\to\Es^{\Gamma(\Es)}$ by $\Theta:e\mapsto\sigma+\Is_{\eta^\Es(\pi(e))}$. 
The map $\Theta$ uniquely induces a function 
$\Xi^\Es:\Es\to(\eta^\Es)^\bullet(\Es^{\Gamma(\Es)})$ with the standard $\eta^\Es$-pull-back of $\Es^{\Gamma(\Es)}$ given by 
$\Xi^\Es(e):=(\pi(e),\Theta(e))$. By direct computation the map $\Xi^\Es$ is an ``algebraic isomorphism'' of Fell 
bundles\footnote{By this we mean that $\Xi^\Es:\Es\to(\eta^\Es)^\bullet(\Es^{\Gamma(\Es)})$ is a fiber preserving map between bundles, over the same base space $\Xs$, that is also a bijective fiberwise linear $*$-functor between the total spaces.} whose inverse is $\Omega^\Es$.  
  
The continuity of $\Omega^\Es$ is equivalent to that of $\widetilde{\Omega}^\Es: \Es^{\Gamma(\Es)}\to \Es$, 
$\widetilde{\Omega}^\Es(\sigma+\Is_{\eta^\Es(p_{AB})}):=\sigma_p$, with $\sigma\in \Gamma(\Es)_{AB}$. Given a net $j\to\sigma^j+\Is_{\eta^\Es(p^j_{AB^j})}$ in $\Es^{\Gamma(\Es)}$ converging to the point $\sigma+\Is_{\eta^\Es(p_{AB})}$ in the topology defined in proposition~\ref{prop: egamma}, without loss of generality we can assume that $j\to\sigma^j$ is uniformly convergent to $\sigma$ in a neighborhood $U$ of $\eta^\Es(p_{AB})$. This means that, for all $\epsilon>0$,  
$\|\sigma^j([\omega]_{AB})-\sigma([\omega]_{AB})\|<\epsilon$ 
for $[\omega]_{AB}\in U$ for all sufficiently large $j$. 
Since $\Rs^{\Gamma(\Es)}$ is discrete, the net $AB^j$ is eventually equal to $AB$ and since $\eta^\Es$ is a homeomorphism, 
$p^j_{AB}$ eventually lies in any neighborhood of $p_{AB}$ and hence the net 
$\widetilde{\Omega}^\Es(\sigma^j+\Is_{\eta^\Es(p^j_{AB^j})})=(\sigma^j)_{p^j}$ converges to 
$\widetilde{\Omega}(\sigma+\Is_{\eta^\Es(p_{AB})})=\sigma_p$ in the Banach bundle topology of $\Es$. 

Since $\Omega^\Es$ is an isometry, it follows from~\cite[Proposition~13.17]{FD} that its inverse is continuous too and hence the evaluation transform $\Eg^\Es:=(\eta^\Es,\Omega^\Es)$ is an isomorphism of spaceoids.
\end{proof} 

\subsection{Duality}

\begin{theorem}\label{th: catgel}
The pair of functors $(\Gamma,\Sigma)$ provides a duality between the category $\Tf$ of morphisms between spaceoids 
that are object-bijective on the discrete part of the objects
and the category $\Af$ of 
object-bijective $*$-functors between small commutative full C*-categories. 
\end{theorem}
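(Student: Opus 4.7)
The plan is to use the two isomorphism theorems already established --- that the Gel'fand transform $\Gg_\Cs\colon\Cs\to\Gamma(\Sigma(\Cs))$ is a full, faithful, isometric, object-bijective $*$-functor (Theorem~\ref{th: rep}) and that the evaluation transform $\Eg_\Es\colon(\Es,\pi,\Xs)\to\Sigma(\Gamma(\Es))$ is an isomorphism of spaceoids --- and upgrade them to natural isomorphisms
\begin{equation*}
\Gg\colon\id_\Af\Rightarrow\Gamma\circ\Sigma,\qquad \Eg\colon\id_\Tf\Rightarrow\Sigma\circ\Gamma.
\end{equation*}
Since both $\Gamma$ and $\Sigma$ are contravariant, the composites $\Gamma\circ\Sigma$ and $\Sigma\circ\Gamma$ are covariant, and a pair of such natural isomorphisms is exactly what is needed to conclude the duality.

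The object-wise invertibility is already at hand: full, faithful, and object-bijective together imply bijection on each $\Hom$-space, so $\Gg_\Cs$ is an isomorphism in $\Af$ for every $\Cs$, while $\Eg_\Es$ has been shown to be an isomorphism in $\Tf$ for every spaceoid. What remains is naturality, i.e., commutativity of
\begin{equation*}
\Gamma(\Sigma^\Phi)\circ\Gg_\Cs=\Gg_\Ds\circ\Phi,\qquad \Sigma^{\Gamma_{(f,\F)}}\circ\Eg_{\Es_1}=\Eg_{\Es_2}\circ(f,\F),
\end{equation*}
for every $\Phi\colon\Cs\to\Ds$ in $\Af$ and every $(f,\F)\colon\Es_1\to\Es_2$ in $\Tf$.

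For the first square, I would fix $x\in\Cs_{BA}$ and $[\omega]\in\Delta^\Ds$ and chase through the definitions. By construction of $\Sigma^\Phi=(\lambda^\Phi,\Lambda^\Phi)$, the section $\Gamma(\Sigma^\Phi)(\hat{x})=\Lambda^\Phi\circ(\lambda^\Phi)^\bullet(\hat{x})$ takes at $\omega_{AB}$ the value $\Phi_\omega(\hat{x}_{[\omega\circ\Phi]})=\Phi_\omega(x+\Is_{(\omega\circ\Phi)_{AB}})=\Phi(x)+\Is_{\omega_{AB}}=\widehat{\Phi(x)}_{[\omega]}$, which is exactly $(\Gg_\Ds\circ\Phi)(x)$ evaluated at $[\omega]$. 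A symmetric verification settles the second square: on base points, $\lambda^{\Gamma_{(f,\F)}}\circ\eta^{\Es_1}$ agrees with $\eta^{\Es_2}\circ f$ via the fiberwise identification of proposition~\ref{prop: bije}; on total spaces, evaluating a pulled-back section $(f^\bullet\sigma)_{p_{AB}}$ and then applying $\F$ reproduces precisely the recipe defining $\Omega^{\Es_1}$.

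The only real obstacle is bookkeeping through the pull-back and quotient constructions, together with the fiber identifications $\Es^\Cs_\omega=\Cs/\Is_\omega$; no further analytic content is needed beyond what has already been proved. Once the two naturality squares are checked, the theorem is a formal consequence of the object-wise isomorphism statements.
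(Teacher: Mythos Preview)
Your proposal is correct and follows exactly the paper's approach: upgrade the already-established object-wise isomorphisms (the Gel'fand transform $\Gg_\Cs$ and the evaluation transform $\Eg_\Es$) to natural isomorphisms $\Gg\colon\id_\Af\Rightarrow\Gamma\circ\Sigma$ and $\Eg\colon\id_\Tf\Rightarrow\Sigma\circ\Gamma$ by direct verification of the two naturality squares, with the same chase through the pull-back and quotient definitions. The only cosmetic difference is that the paper invokes Proposition~\ref{prop: field} (rather than Proposition~\ref{prop: bije}) for the base-point equality $\lambda^{\Gamma_{(f,\F)}}\circ\eta^{\Es_1}=\eta^{\Es_2}\circ f$, since what is actually needed is that unitary equivalence of $\CC$-valued $*$-functors is detected on the diagonal blocks.
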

\begin{proof}
To see that the map $\Gg:\Cs\mapsto\Gg_\Cs$ (that to every $\Cs\in \Ob_\Af$ associates the Gel'fand transform of $\Cs$) is a natural isomorphism between the identity endofunctor $\I_\Af:\Af\to \Af$ and the functor $\Gamma\circ\Sigma:\Af\to \Af$ we have to show that, given an object-bijective $*$-functor $\Phi:\Cs_1\to\Cs_2$, the identity 
$\Gamma_{\Sigma^\Phi}(\Gg_{\Cs_1}(x))=\Gg_{\Cs_2}(\Phi(x))$ holds for any $x\in \Cs_1$, i.e.~the commutativity of the diagram: 
\begin{equation*} 
\xymatrix{
\Cs_1 \ar[rr]^{\Gg_{\Cs_1}} \ar[d]_{\Phi} & & \Gamma(\Sigma(\Cs_1))\ar[d]^{\Gamma_{\Sigma^\Phi}} \\
\Cs_2 \ar[rr]^{\Gg_{\Cs_2}} & & \Gamma(\Sigma(\Cs_2)), 
}
\end{equation*}
that follows from this direct computation: 
\begin{align*} 
&\Gamma_{\Sigma^\Phi}(\Gg_{\Cs_1}(x))_{[\omega_2]} =
\Lambda^\Phi\Big((\lambda^\Phi)^\bullet(\hat{x})_{[\omega_2]}\Big) 
=\Lambda^\Phi\Big([\omega_2]_{A_2B_2}, \, \hat{x}(\lambda^\Phi([\omega_2]_{A_2B_2}))\Big) \\
&=\Lambda^\Phi\Big([\omega_2]_{A_2B_2},\,  
x + \Is_{\lambda^\Phi([\omega_2]_{A_2B_2})}\Big)  
= \Big([\omega_2]_{A_2B_2}, \, 
\Phi(x) + \Is_{[\omega_2]_{A_2B_2}}\Big) 
=\Gg_{\Cs_2}(\Phi(x))_{[\omega_2]}. 
\end{align*}

To see that the map $\Eg:\Es\mapsto\Eg_\Es$ (that to every spaceoid $(\Es,\pi,\Xs)$ associates its evaluation transform $\Eg_\Es$) 
is a natural isomorphism between the identity endofunctor $\I_\Tf:\Tf\to\Tf$ and 
the functor $\Sigma\circ\Gamma:\Tf\to\Tf$ we must prove, for any given morphism of spaceoids $(f,\F)$ from $(\Es_1,\pi_1,\Xs_1)$ 
to $(\Es_2,\pi_2,\Xs_2)$, the commutativity of the diagram: 
\begin{equation*}
\xymatrix{
(\Es_1,\pi_1,\Xs_1) \ar[rrr]^{\Eg_{\Es_1}=(\eta^{\Es_1},\Omega^{\Es_1})} \ar[d]_{(f,\F)} & & & \Sigma(\Gamma(\Es_1))
\ar[d]^{\Sigma^{\Gamma_{(f,\F)}}=(\lambda^{\Gamma_{(f,\F)}},\Lambda^{\Gamma_{(f,\F)}})} \\
(\Es_2,\pi_2,\Xs_2) \ar[rrr]^{\Eg_{\Es_2}=(\eta^{\Es_2},\Omega^{\Es_2})} & & & \Sigma(\Gamma(\Es_2)).
} 
\end{equation*} 
The proof amounts to showing the equalities
\begin{gather}
\lambda^{\Gamma_{(f,\F)}}\circ \eta^{\Es_1}=
\eta^{\Es_2}\circ f, 
\quad  
\Omega^{\Es_1}\circ (\eta^{\Es_1})^\bullet(\Lambda^{\Gamma_{(f,\F)}}) \circ \Theta_1
= \F\circ f^\bullet(\Omega^{\Es_2})\circ \Theta_2, \label{eq: e2}
\end{gather}
where  $\Theta_1:= \Theta^{\Es^{\Gamma(\Es_2)}}_{\lambda^{\Gamma_{(f,\F)}}, \eta^{\Es_1}}$, 
$\Theta_2:=\Theta^{\Es^{\Gamma(\Es_2)}}_{\eta^{\Es_2}, f}$. 

Since for every point $p_{AB}\in\Xs_1$, we have 
$\lambda^{\Gamma_{(f,\F)}}\circ \eta^{\Es_1}(p_{AB})=( [\gamma_p\circ\ev_p\circ\Gamma_{(f,\F)}], f_\Rs(AB))$ and 
$\eta^{\Es_2}\circ f(p_{AB})=([\gamma_{f(p)}\circ\ev_{f(p)}], f_\Rs(AB))$, the first equation is a consequence of 
proposition~\ref{prop: field}. The second equation is then proved by a lengthy but elementary calculation. 
\end{proof}

\medskip 

\begin{remark}
Finally, note that, although for simplicity we only described a spectral theory for commutative full C*-categories, it is perfectly viable and there are no substantial obstacles to the development of a spectral theory for commutative full ``non-unital''  
C*-categories\footnote{Strictly speaking these are not categories, since they are lacking identities, but they otherwise satisfy all the other properties listed in the definition of a C*-category.} (as defined by P.~Mitchener~\cite{M1}). In this case the base spectrum is only locally compact and we have to deal with a locally compact version of topological spaceoids (so, for example, only sections ``going to zero at infinity'' are considered in the definition of the section functor). 
\end{remark} 

\subsection{Horizontal Categorification}\label{sec: hc+}

The usual Gel'fand-Na\u\i mark duality theorem is easily recovered from our result identifying a compact Hausdorff topological space $X$ with the trivial spaceoid $\Ts_X$ with total space $\Xs_X\times \CC$ and base category 
$\Xs_X:=\Delta_X\times \Rs_{\O_X}$ where $\O_X:=\{X\}$ is a discrete space with only one point $X$; and similarly, identifying a unital commutative C*-algebra $\As$ with the full commutative C*-category $\Cs_\As$ with one object via 
$\Hom_{\Cs_\As}:=\As$ and $\Ob_{\Cs_\As}:=\{\As\}$. 
More precisely, the duality $(\Gamma,\Sigma)$ between the categories $\Tf$ and $\Af$ is a ``horizontal categorification'' of the usual Gel'fand-Na\u\i mark duality in the sense specified by the following result whose proof is absolutely elementary: 
\begin{theorem} 
Let $\Tf^{(1)}$ denote the full subcategory of $\Tf$ consisting of those trivial spaceoids $\Ts_X:=\Xs_X\times \CC$, where 
$\Xs_X:=\Delta_X\times \Rs_{\Os_X}$, $\Os_X:=\{X\}$ and $X$ is a compact Hausdorff space. 
Let $\Af^{(1)}$ denote the full subcategory of $\Af$ consisting of those full commutative small one-object C*-categories $\Cs_\As$ with morphisms $\Hom_{\Cs_\As}:=\As$, objects $\Ob_{\Cs_\As}:=\{\As\}$ and composition involution and norm induced from those in the commutative unital C*-algebra $\As$. 
The natural duality $(\Gamma,\Sigma)$ between the categories $\Tf, \Af$ restricts to a duality $(\Gamma^{(1)},\Sigma^{(1)})$ between the categories $\Tf^{(1)}, \Af ^{(1)}$ i.e.~the
following pair of diagrams of functors is commutative: 
\begin{equation*}
\xymatrix{
\Tf^{(1)} \ar@{^(->}[d] \ar@^{->}[rr]^{\Gamma^{(1)}} & & \Af^{(1)} \ar@^{->}[ll]^{\Sigma^{(1)}} \ar@{^(->}[d] \\
\Tf \ar@^{->}[rr]^\Gamma & & \ar@^{->}[ll]^\Sigma \Af, 
}
\end{equation*}
where $\Gamma^{(1)}$ and $\Sigma^{(1)}$ denote the restrictions of the functors $\Gamma$ and $\Sigma$ and the vertical arrows denote the inclusion functors of the respective categories. 

The category $\Tf_1$ of continuous maps between compact Hausdorff spaces is isomorphic to the category $\Tf^{(1)}$ via the functor $\Ts: \Tf_1\to\Tf^{(1)}$ defined as follows:
\begin{itemize}
\item
to every compact Hausdorff space $X$, $\Ts$ associates the spaceoid $\Ts_X:=(\Es_X,\pi_X,\Xs_X)$ that is the trivial bundle with fiber $\CC$ over the space $\Xs_X:=\Delta_X\times\{(X,X)\}$, 
\item
to every continuous map $f: X\to Y$ between compact Hausdorff spaces, $\Ts$ associates the morphism of spaceoids $\Ts(f):\Ts_X\to \Ts_Y$ defined by $\Ts(f):=(\Ts(f)_\Xs,\Ts(f)_\Es)$ where $\Ts(f)_\Xs:p_{XX}\mapsto f(p)_{YY}$, for all 
$p\in X$, and $\Ts(f)_\Es:\Ts(f)_\Xs^\bullet(\Es_Y)\to\Ts_X$ denotes the canonical isomorphism between trivial line bundles over 
$X$.\footnote{Remember that the pull-back of the trivial line bundle $\Ts_Y$ under the homeomorphism $\Ts(f)_\Xs$ is a trivial line bundle on $\Xs_X$.}
\end{itemize}
The category $\Af_1$ of unital $*$-homomorphisms of unital commutative C*-algebras is isomorphic to the category $\Af^{(1)}$ via the functor $\Cs:\Af_1\to\Af^{(1)}$ that to every unital commutative \hbox{C*-algebra} $\As$ associates the C*-category $\Cs_\As$ and that to every unital $*$-homomorphism $\phi:\As\to\Bs$ associates the $*$-functor $\Cs(\phi):\Cs_\As\to\Cs_\Bs$ given on arrows by $\Cs(\phi)(x):=\phi(x)$, for all $x\in \As$, and on objects by $\Cs(\phi)_o:\As\mapsto \Bs$. 

The functors $\Cs\circ\Gamma$ and $\Gamma^{(1)}\circ\Ts$ are naturally equivalent via the natural transformation that to every 
$X$ associates the canonical isomorphism between $\Cs_{C(X;\CC)}$ and $\Gamma(\Ts_X)$. 

The functors $\Ts\circ \Sigma$ and $\Sigma^{(1)}\circ \Cs$ are naturally equivalent via the natural transformation that to every $\As$ associates the canonical isomorphism between $\Ts_{\Sp(\As)}$ and $\Sigma(\Cs_\As)$. 
\end{theorem}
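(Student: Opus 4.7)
The plan is to verify the four assertions of the theorem in order, each of which is essentially a matter of unwinding definitions, since the restriction to one-object (respectively single-point discrete) data collapses the whole categorical machinery down to its classical prototype.

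First I would check that the functors $\Gamma$ and $\Sigma$ do in fact restrict to functors $\Gamma^{(1)}\colon\Tf^{(1)}\to\Af^{(1)}$ and $\Sigma^{(1)}\colon\Af^{(1)}\to\Tf^{(1)}$. For a trivial spaceoid $\Ts_X\in\Tf^{(1)}$, the base category has the single object $X$ and the fiber bundle is $\Xs_X\times\CC$; thus $\Gamma(\Ts_X)$ has a single object and its unique Hom-set is $C(\Delta_X\times\{(X,X)\};\CC)\cong C(X;\CC)$, a unital commutative C*-algebra, so $\Gamma(\Ts_X)\in\Af^{(1)}$. Conversely, for $\Cs_\As\in\Af^{(1)}$ one has $\Rs^{\Cs_\As}$ the trivial equivalence relation on $\{\As\}$ and $\Sp_b(\Cs_\As)\simeq\Sp(\As)$ by proposition~\ref{prop: bije}; the unique fiber over each $[\omega]$ is $\As/\ker(\omega)\cong\CC$, canonically via the character $\omega$. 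This canonical identification supplies a global trivialization of the Fell bundle, showing $\Sigma(\Cs_\As)\in\Tf^{(1)}$.

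Next I would establish that $\Ts\colon\Tf_1\to\Tf^{(1)}$ and $\Cs\colon\Af_1\to\Af^{(1)}$ are isomorphisms of categories. For $\Ts$, the construction is clearly functorial (pull-back of the trivial line bundle along a homeomorphism is trivial with a canonical trivialization); the inverse functor sends $(\Es,\pi,\Xs)\in\Tf^{(1)}$ to the compact Hausdorff space $\Delta_X$ underlying $\Xs$, and on morphisms extracts the continuous component $f_\Delta$ of the pair $(f,\F)$ — using that $\F$, being a fiberwise linear $*$-functor between one-dimensional trivial bundles with one object, is uniquely determined by $f_\Delta$. For $\Cs$, the inverse functor extracts the unique $\Hom$-algebra $\Hom_{\Cs_\As}=\As$ and the underlying map on arrows from an object-bijective $*$-functor $\Cs_\As\to\Cs_\Bs$, which is automatically a unital $*$-homomorphism. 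Strict functoriality and bijectivity of $\Ts$ and $\Cs$ on both objects and morphisms are immediate from the definitions.

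Once both isomorphisms are in place, the commutativity (up to natural equivalence) of the two diagrams follows by direct inspection. For each compact Hausdorff $X$, both $\Cs\circ\Gamma$ and $\Gamma^{(1)}\circ\Ts$ produce a one-object C*-category whose unique Hom-set is canonically $C(X;\CC)\cong\Gamma(\Ts_X)_{XX}$; this identification provides the natural isomorphism, and naturality with respect to a continuous $f\colon X\to Y$ reduces to the classical fact that pull-back of continuous sections of the trivial bundle agrees with precomposition with $f$ on $C(Y;\CC)$. Dually, for each unital commutative C*-algebra $\As$, the Gel'fand transform $\As\to C(\Sp(\As);\CC)$ yields precisely the trivialization of $\Sigma(\Cs_\As)$ identifying it with $\Ts_{\Sp(\As)}$, and naturality with respect to a $*$-homomorphism $\phi\colon\As\to\Bs$ reduces to the commutativity between Gel'fand transform and pull-back along the spectral map $\Sp(\phi)\colon\Sp(\Bs)\to\Sp(\As)$.

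The only step that requires a moment's pause is verifying that the spectral spaceoid of a one-object $\Cs_\As$ is globally trivial as a Fell bundle (and not merely pointwise one-dimensional), since $\Tf^{(1)}$ was defined to consist of literally trivial bundles. The hard part — or rather, the only part that is not bookkeeping — is checking that the Gel'fand transform provides a continuous global trivialization in the topology introduced in proposition~\ref{prop: egamma}; this is immediate because the basic open sets there are defined exactly in terms of Gel'fand transforms $\hat x$, so the trivialization $[\omega]\mapsto(\omega,\hat{x}([\omega]))$ is continuous by construction and is fiberwise a linear $*$-isomorphism. Everything else is a routine diagram chase which I would leave to the reader, in keeping with the author's comment that the proof is elementary.
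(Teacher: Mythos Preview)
The paper provides no proof of this theorem, remarking only that it ``is absolutely elementary.'' Your proposal correctly supplies the elementary verification the authors omit, and the overall strategy---checking that $\Gamma,\Sigma$ restrict, that $\Ts,\Cs$ are isomorphisms, and then reading off the natural equivalences from the classical Gel'fand machinery---is exactly what one would expect.

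Two comments. First, your identification of the one non-bookkeeping point is apt: since $\Tf^{(1)}$ is defined to consist of \emph{literally} trivial bundles $\Xs_X\times\CC$, one must argue that $\Sigma(\Cs_\As)$ is (canonically isomorphic to) such a bundle, not merely rank-one. Your observation that the topology of proposition~\ref{prop: egamma} is defined precisely via Gel'fand transforms, so that $\hat{(\cdot)}$ itself furnishes a continuous global frame, is the right way to dispatch this. Second, your justification that $\Ts$ is bijective on morphisms---because a fibrewise linear $*$-functor $\F$ between trivial one-object line bundles must, on each fibre, be the unique unital $*$-homomorphism $\CC\to\CC$ and hence is forced---is correct and worth stating explicitly, since it is what makes the inverse of $\Ts$ well defined.
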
 

\section{Examples and Applications} \label{sec: exappl}

Commutative full C*-categories are abundant, just to mention a few examples: 
\begin{itemize} 
\item
Every Abelian unital C*-algebra $\As$ gives a commutative full C*-category $\Cs_\As$ with only one object (as already mentioned in subsection~\ref{sec: hc+}). 
\item 
Examples of commutative full C*-categories with two objects can be obtained via the following construction 
(see~L.~Brown-P.~Green-M.~Rieffel~\cite{BGR}) of the ``linking \hbox{C*-category}'' $\Ls(\Ms)$ of an imprimitivity Hilbert 
C*-bimodule $_\As\Ms_\Bs$ over two commutative unital C*-algebras $\As,\Bs$.    
Let $_\As\Ms_\Bs$ be an imprimitivity C*-bimodule over unital commutative C*-algebras $\As,\Bs$. Denote by $_\Bs\Ms^+_\As$ its  Rieffel dual and by $\iota:\Ms\to\Ms^+$ the canonical bijective map such that $\iota(a\cdot x\cdot b)=b^*\cdot\iota(x)\cdot a^*$ for all $a\in \As, b\in \Bs, x\in \Ms$ (see for example~\cite[Proposition~2.19]{BCL3} for more details). 
The linking C*-category $\Ls(\Ms)$ of $\Ms$ is the full commutative C*-category with two objects $A,B$ and morphisms given by 
$\Ls(\Ms)_{AA}:=\As, \ \Ls(\Ms)_{BB}:=\Bs, \ \Ls(\Ms)_{AB}:=\Ms, \ \Ls(\Ms)_{BA}:=\Ms^+$ where the only non-elementary operations are the involutions of elements of $\Ms$ and $\Ms^+$, given by 
$x^*:=\iota(x)$ and $y^*:=\iota^{-1}(y)$ for all $x\in \Ms$, $y\in \Ms^+$ 
and the compositions between elements of $\Ms$ and $\Ms^+$ that are given via their respective $\As$-valued and $\Bs$-valued inner products as follows: $x\circ y:={}_\As\ip{x}{y^*}$ and $y\circ x:=\ip{y^*}{x}_\Bs$, for all $x\in \Ms$, $y\in \Ms^+$. 

Making use of the canonical isomorphisms of imprimitivity bimodules 
\begin{equation*}
(\Ms\otimes\Ns)\otimes\Ts\simeq(\Ms\otimes\Ns\otimes\Ts)\simeq\Ms\otimes(\Ns\otimes\Ts)\quad  \text{and}\quad  
(\Ms\otimes\Ns)^+\simeq\Ns^+\otimes\Ms^+
\end{equation*}  
for the definition of the compositions via tensor products and ``contractions'', we can generalize the previous construction of linking C*-category to the case of an arbitrary (finite) collection of objects. 
In practice, given a (finite) family of commutative unital C*-algebras $\As_1,\dots,\As_n$ and a family of imprimitivity Hilbert C*-bimodules
$_{\As_1}\Ms_{\As_2},\dots,\ _{\As_{n-1}}\Ms_{\As_n}$, the ``linking C*-category'' $\Ls(_{\As_1}\Ms_{\As_2},\dots,\ _{\As_{n-1}}\Ms_{\As_n})$ is the full commutative C*-category with $n$ objects $B_1,\dots,B_n$, where 
\begin{gather*}
\Ls(_{\As_1}\Ms_{\As_2},\dots,\ _{\As_{n-1}}\Ms_{\As_n})_{B_jB_k}:=
\begin{cases}
{}_{\As_j}\Ms_{\As_{j+1}}\otimes\cdots\otimes{}_{\As_{k-1}}\Ms_{\As_k}, \quad \text{for $j<k$}, 
\\
\As_j, \quad \text{for $j=k$}, 
\\
({}_{\As_k}\Ms_{\As_{k+1}}\otimes\cdots\otimes{}_{\As_{j-1}}\Ms_{\As_j})^+, \quad \text{for $k<j$}, 
\end{cases}
\end{gather*} 

The examples in the next item are rather natural, especially for those who are familiar with the Doplicher-Roberts abstract duality theory for compact groups.

\item
Let $G$ be a compact group, and consider the C*-category $\Rep (G)$ with objects the unitary representations of $G$ on Hilbert spaces and arrows their intertwiners (actually, $\Rep (G)$ is a W*-category). 
Then the full subcategory of $\Rep (G)$ whose objects are the multiplicity-free representations is commutative.
Moreover, a category of multiplicity-free representations is full whenever all the objects are equivalent.

Let $\As$ be a unital C*-algebra. A category of nondegenerate $*$-representations of $\As$ on Hilbert spaces
is commutative whenever all the objects are multiplicity-free (see e.g.~\cite[Chapter~ 2]{Ar}). In addition, such a commutative 
W*-category is full whenever all the objects are equivalent. 
If $\As$ is commutative with metrizable spectrum, 
a category of nondegenerate representations of $\As$ on separable Hilbert spaces that is both commutative and full
can be interpreted in terms of a family of equivalent finite Borel measures on the spectrum of 
$\As$~\cite[Theorems 2.2.2 and 2.2.4]{Ar}.
This fact can be generalized to GCR algebras~\cite[Chapter~4]{Ar}.  

Let $\As$ be a unital C*-algebra, and consider the C*-category $\End (\As)$ 
with objects the unital $*$-endomorphisms of $\As$
and Banach spaces of arrows 
\begin{equation*}
(\rho,\sigma) = \{x \in \As \ | \ x\rho(a)=\sigma(a)x, \ \forall a \in \As\}.
\end{equation*}
The category $\Aut (\As)$, i.e.~the full subcategory of $\End(\As)$ with objects the unital 
\hbox{$*$-automorphisms} of $\As$, is clearly commutative, as $(\rho,\rho)$ equals the center of $\As$, 
for every automorphism $\rho$.\footnote{Of course, this is still true for all irreducible morphisms.}
Again, a subcategory of $\Aut (\As)$ is full whenever all the objects are equivalent.
\item 
P.~Mitchener~\cite{M1} associates C*-categories $C^*(\G)$ and $C^*_r(\G)$ to a discrete groupoid $\G$. 
It is easy to see that these categories are commutative exactly when all the stabilizer subgroups of the groupoid $\G$ are Abelian (i.e.~$\G_{AA}$ is an Abelian group for all objects $A$ of the groupoid). In that case they are full if the groupoid $\G$ is transitive (i.e.~$\G_{AB}\neq\varnothing$, for every pair of objects $A,B\in \Ob_\G$). 
 
This example admits an immediate generalization to the case of involutive categories. Given an involutive category $\X$, the set of 
$\CC$-valued maps on $\X$ with finite support contained in any one of the sets $\X_{AB}$, with $A,B\in \Ob_\X$, is the family of morphisms of a $*$-category $C^*_o(\X)$, with objects $\Ob_\X$, where the composition is the usual convolution of finite sequences and the involution is defined via $(\alpha_x)^*:=\cj{\alpha}_{x^*}$. 
The $*$-category $C^*_o(\X)$ has a natural continuous left-regular action on $L^2(\X)$ (that is the family of Hilbert spaces, indexed by $A\in \Ob_\X$, obtained by completing $\oplus_{B\in \Ob_\X}C^*_o(\X)_{AB}$ under the inner product 
$\ip{(\alpha_x)}{(\beta_y)}:=\sum_{x,y}\cj{\alpha}_{x^*}\cdot\beta_y$) and its C*-completion in the induced operator norm is the C*-category $C^*_r(\X)$. 
Taking the C*-completion of $C^*_o(\X)$ under the supremum of all the C*-norms induced by its continuous representations we obtain the C*-category $C^*(\X)$. 
The categories $C^*(\X)$ and $C^*_r(\X)$ are commutative whenever $\X_{AA}$ is commutative for all objects $A\in \Ob_\X$ and they are full if and only if the category $\X$ is saturated in the following sense $\X_{AB}\circ\X_{BC}=\X_{AC}$ for all objects 
$A,B,C\in \Ob_\X$. 

\item 
Given any non-diagonal arrow $x$ in a C*-category $\Cs$, the C*-subcategory $\Cs(x)$ of $\Cs$ generated by $x$
is full and commutative, see theorem~\ref{th: hcatcal} and the related discussion 
(notice that $\Cs(x)$ might well be a non-unital C*-category if $x$ is not invertible). 
\item The category of Hermitian line bundles over a compact Hausdorff space $X$ with line bundle morphisms as arrows is a 
C*-category, which turns out to be full and commutative. 
\end{itemize}

We now deal with specific examples and constructions of spaceoids. Note that (although every topological spaceoid is of course isomorphic to the spectrum of a commutative full C*-category) the examples mentioned here below have in principle no direct relation with C*-categories and arise from some well-known constructs in (algebraic) topology. 

\begin{itemize}
\item 
As already described in detail in subsection~\ref{sec: hc+}, the most elementary examples of spaceoid are those associated to every compact Hausdorff topological spaces $X$ via the trivial Hermitian line bundles $\Ts_X:=(\Es_X,\pi_X,\Xs_X)$ over the topological space $\Xs_X:=\Delta_X\times \{(X,X)\}$ with total space $\Es_X:=\Xs_X\times \CC$ and projection $\pi_X$ onto the first factor. 
\item 
Every (possibly non-trivial) Hermitian line bundle $(E,\pi,X)$ over a compact Hausdorff topological space $X$ uniquely determines a spaceoid $\Ls(E):=(\Es^E,\pi^E,\Xs^E)$, called its ``linking spaceoid'', in the following canonical way. 
Define the base topological involutive category as $\Xs^E:=\Delta_X\times\Rs_\O$ with $\O:=\{A,B\}$, and as total space consider 
$\Es^E_{AA}:=(\Delta_X\times \{AA\})\times \CC$, $\Es^E_{BB}:=(\Delta_X\times \{BB\})\times \CC$, 
$\Es^E_{AB}:=E\times\{AB\}$, $\Es^E_{BA}:=E^+\times\{BA\}$, where by $(E^+,\pi^+,X)$ we denote the Hermitian line bundle dual to $(E,\pi,X)$ (this is the line bundle with fibers $E^+_p:=(E_p)^+$ given by the dual of the inner product vector space $E_p$). 
Define on the total space $\Es^E$ the operations of involution as usual fiberwise conjugation on $\Es^E_{AA}$ and $\Es^E_{BB}$ and by the canonical antilinear map induced between $\Es^E_{AB}$ and $\Es^E_{BA}$ by the natural fiberwise anti-isomorphism between $E$ and $E^+$. Finally define the composition on the total space as the usual fiberwise product on $\Es^E_{AA},\Es^E_{BB}$ and, between elements in $\Es^E_{AB}$ and $\Es^E_{BA}$, via the canonical contraction between $E$ and $E^+$ as 
$e_{AB} \circ e'_{BA}:=e'(e)_{AA}$ and $e'_{BA}\circ e_{AB}:=e'(e)_{BB}$, for all $e\in E_p$ and $e'\in E^+_p$, with $p\in X$. 
\item 
In perfect parallel with the construction of the linking C*-category for a family of Hilbert C*-bimodules, the previous construction of the linking spaceoid of a Hermitian line bundle can be generalized in order to define the ``linking spaceoid'' $\Ls(E^1,\dots,E^n)$ of a family of (possibly non-trivial) Hermitian line bundles $(E^1,\pi^1,X), \dots, (E^n,\pi^n,X)$ over the same compact Hausdorff space 
$X$. 

For this purpose, denoting $\Ls(E^1,\dots, E^n):=(\Es^{E^1\dots E^n},\pi^{E^1\dots E^n},\Xs^{E^1\dots E^n})$, we take the base topological $*$-category as $\Xs^{E^1\dots E^n}:=\Delta_X\times \Rs_\O$, where $\O:=\{A_1,\dots,A_{n+1}\}$ is a set of 
$n+1$ elements. We then define the total space $\Es^{E^1\dots E^n}$ of the linking spaceoid specifying its blocks on the topological space $\Xs^{E^1\dots E^n}_{A_jA_k}=\Delta_X\times\{A_jA_k\}$ as follows: 
\begin{equation*}
\Es^{E^1\dots E^n}_{A_jA_k}:=
\begin{cases}
E^j\otimes \cdots \otimes E^{k-1}, \quad \text{for $j<k-1$},
\\
E^j \quad \text{for $j=k-1$}, 
\\
\CC\times (\Delta_X\times \{A_jA_k\}), \quad \text{for $j=k$}, 
\\
(E^j)^+ \quad \text{for $k=j-1$}, 
\\
(E^{k-1})^+\otimes \cdots\otimes (E^j)^+ \quad \text{for $k<j-1$},
\end{cases}
\end{equation*}
where $\otimes$ denotes the fiberwise tensor products of line bundles over the same space $X$. 
On the total space $\Es^{E^1\dots E^n}$ the fiberwise involution and the composition are defined making use of the canonical isomorphisms of line bundles $(E^j\otimes E^k)^+\simeq (E^k)^+\otimes (E^j)^+$ and $E^i\otimes (E^j\otimes E^k)\simeq E^i\otimes E^j\otimes E^k\simeq (E^i\otimes E^j)\otimes E^k$, for all $i,j,k=1,\dots n$, via the contraction dualities between $E^j$ and $(E^j)^+$ for $j=1,\dots, n$ and the tensor products $E^j\times E^k\to E^j\otimes E^k$ for all $j,k=1,\dots,n$. 

Examples of linking spaceoids of Hermitian line bundles, that stay in perfect duality with those of the linking C*-categories of imprimitivity Hilbert C*-bimodules previously described, can be obtained via a ``bivariant'' notion of Hermitian line bundle (i.e.~a Hermitian line bundle on a base space that is the graph $f\subset X\times Y$ of a  homeomorphism $f:X\to Y$ between two compact Hausdorff topological spaces $X,Y$) that is developed in more detail in our companion work~\cite{BCL3}. 
\item 
Finally, we briefly introduce here another natural way to produce spaceoids via ``associated line bundles'' to a suitable categorification of $\TT$-torsors. In more details: given an equivalence relation $\Rs$, consider the family $[\Rs;\TT]$ of homomorphisms of the groupoid $\Rs$ with values in the torus group $\TT:=\{\alpha\in \CC \ | \ |\alpha|=1\}$. Clearly $[\Rs;\TT]$ is itself an Abelian group with the operation of pointwise multiplication between homomorphisms. 
For any compact Hausdorff space $X$ consider a $[\Rs;\TT]$-torsor $(\Ts,\pi,X)$. Since the set $[\Rs;\CC]$ of $\CC$-valued homomorphisms has a natural structure of $[\Rs;\TT]$-space with action given by pointwise multiplication, we can construct the ``associated bundle'' $\Ts\times_{[\Rs;\TT]}[\Rs;\CC]$ over the space $X$ whose elements are equivalence classes $[(\phi,v)]$ of pairs $(\phi,v)\in \Ts\times[\Rs;\CC]$ under the equivalence relation $(\phi,v)\simeq(\psi,w)$ if and only if there exists $g\in [\Rs;\TT]$ such that $\phi\cdot g=\psi$ and $v=g\cdot w$. Every such ``associated bundle'' can be seen, simply by rearrangement of variables,   as a spaceoid over the base $\Delta_X\times\Rs$. To obtain the spectral spaceoid of a full commutative C*-category $\Cs$, it is sufficient to take $\Ts:=[\Cs;\CC]$, the set of $\CC$-valued $*$-functors on $\Cs$.
\end{itemize} 

Spectral spaceoids can be easily ``assembled'' starting from the spectra of imprimitivity C*-bimodules developed (via Serre-Swan theorem) in~\cite[Theorem~3.1]{BCL3}. In every full commutative C*-category $\Cs$ and for every pair of objects $A,B\in \Ob_\Cs$, the spectrum of the imprimitivity C*-bimodule $\Cs_{AB}$ is a Hermitian line bundle $(E_{BA},\pi_{BA},R_{BA})$ on the graph\footnote{Note that here we are using $R_{BA}$ to denote the homeomorphism $R_{BA}:\Sp(\Cs_{AA})\to\Sp(\Cs_{BB})$ and together its graph $R_{BA}\subset \Sp(\Cs_{AA})\times\Sp(\Cs_{BB})$. More generally, we use the same letter $R$ to denote a relation from the set $A$ to the set $B$ and its graph $R\subset A\times B$.} of a unique homeomorphism $R_{BA}:\Sp(\Cs_{AA})\to\Sp(\Cs_{BB})$ between the Gel'fand spectra of the two unital commutative C*-algebras  $\Cs_{AA}$ and $\Cs_{BB}$. 
\\ 
Now the (necessarily disjoint) union of all the graphs $R_{BA}\subset \Sp(\Cs_{AA})\times\Sp(\Cs_{BB})$ of the homeomorphisms 
$R_{BA}$, with $A,B\in \Ob_\Cs$, can be seen as the graph of a new relation $\left(\bigcup_{A, B\in \Ob_\Cs}R_{BA}\right)\subset \left(\bigcup_{B\in \Ob_\Cs}\Sp(\Cs_{BB})\right)\times\left(\bigcup_{A\in \Ob_\Cs}\Sp(\Cs_{AA})\right)$ in the set $\bigcup_{A\in \Ob_\Cs}\Sp(\Cs_{AA})$ that is the ``disjoint union'' of the Gel'fand spectra of the diagonal C*-algebras $\Cs_{AA}$ with $A\in \Ob_\Cs$.
Since the homeomorphisms $R_{BA}$ are given by $R_{BA}=|_{BB}\circ |_{AA}^{-1}$ in terms of the restriction homeomorphisms $|_{AA}:\Sp_b(\Cs)\to\Sp(\Cs_{AA})$ defined in propositions~\ref{prop: bije} and theorem~\ref{prop: homo-sp}, the relation 
$\bigcup_{A, B\in \Ob_\Cs}R_{BA}$ is an equivalence relation.\footnote{Note that the new equivalence relation 
$\bigcup_{A,B\in \Ob_\Cs}R_{BA}$ is a relation between elements of the union $\bigcup_{A\in \Ob_\Cs} \Sp(\Cs_{AA})$ of the Gel'fand spectra not to be confused with the ``coarse grained'' equivalence relation 
$\{(\Sp(\Cs_{AA}),\Sp(\Cs_{BB}))\ | \ A,B\in \Ob_\Cs\}$ between the Gel'fand spectra themselves or equivalently with the 
sub-equivalence relation $\{R_{AB}\ | \ A,B\in\Ob_\Cs\}$ of the groupoid of homeomorphisms of compact Hausdorff spaces.} Furthermore, the ``disjoint union'' of all such Hermitian line bundles 
$(E_{BA},\pi_{BA},R_{BA})$ becomes a new Hermitian line bundle $\biguplus_{A,B\in \Ob_\Cs}(E_{AB},\pi_{AB},R_{AB}):=\left(\bigcup_{A, B\in \Ob_\Cs}E_{BA},\bigcup_{A, B\in \Ob_\Cs}\pi_{BA},\bigcup_{A, B\in \Ob_\Cs}R_{BA}\right)$ with total space 
$\bigcup_{B,A\in \Ob_\Cs}E_{BA}$ and base space $\bigcup_{A, B\in \Ob_\Cs}R_{BA}$. 

The map $\tau: [\omega]_{AB}\mapsto (\omega_{AA},\omega_{BB})$ provides an isomorphism of topological involutive categories between $\Delta^\Cs\times\Rs^\Cs$ and $\bigcup_{AB\in \Ob_\Cs}R_{BA}$. 
Since, as Hilbert spaces, the fibers $E_{(\omega_{AA},\omega_{BB})}$ are given by 
$E_{(\omega_{AA},\omega_{BB})}:=\Cs_{AB}/(\Cs_{AB}\cdot\ke(\omega))$ and so coincide with the fibers of the spectral spaceoid 
$\Sigma(\Cs)$ on the elements $[\omega]_{AB}$, we see that $\bigcup_{A,B\in \Ob_\Cs}E_{(\omega_{AA},\omega_{BB})}$ can be naturally equipped with the structure of C*-category and so the bundle $\biguplus_{A,B\in \Ob_\Cs}(E_{AB},\pi_{AB},R_{AB})$ is a rank-one Fell bundle. 
Finally we see that the spectral spaceoid $\Sigma(\Cs)$ coincides with the $\tau$-pull-back 
$\tau^\bullet(\bigcup_{A,B\in \Ob_\Cs}E_{AB})$ of the rank-one Fell bundle $\biguplus_{A,B\in \Ob_\Cs}(E_{AB},\pi_{AB},R_{AB})$. 

\bigskip 

The classical Gel'fand-Na\u\i mark duality for commutative C*-algebras had a number of important applications and, in a parallel way, its extension for commutative full C*-categories described here, will provide interesting ``horizontal categorifications'' of those applications. Among them there are, for example:
\begin{itemize} 
\item 
a Fourier transform in the context of Pontryagin duality for commutative groupoids, 
\item 
a continuous functional calculus for bounded linear operators between Hilbert spaces, 
\item 
a spectral theorem for bounded linear operators between Hilbert spaces. 
\end{itemize}  
Most of these ideas will actually require a serious amount of additional work that deserves a separate detailed treatment elsewhere and so, in order to exemplify the ``capabilities'' of our result, we limit ourselves to the development of a ``horizontally categorified continuous functional calculus'' which is the most immediate and straightforward of the previous topics.   

\medskip

Let $\Cs$ be a C*-category, not necessarily commutative or full, and let $x\in \Cs_{AB}$ be one of its morphisms. 
Consider now $\Cs(x)$, the (non-necessarily unital) C*-category generated by $x$. 
By definition, this is the C*-subalgebra of $\Cs_{AA}$ generated by $x$ if $A=B$, 
and the \hbox{C*-subcategory} of $\Cs$ with two objects $A$ and $B$ and arrow spaces
\begin{gather*}
\Cs(x)_{AA}=\spa\{(x\circ x^*)^n \ | \ n=1,2,3,\ldots\}^-, 
\\
\Cs(x)_{BB}=\spa\{(x^*\circ x)^n \ | \ n=1,2,3,\ldots\}^- 
\end{gather*}
and $\Cs(x)_{AB}=x\circ\Cs(x)_{BB}=\Cs(x)_{BA}^*$ otherwise.
Notice that $\Cs(x) (= \Cs(x^*))$ is always full. 
If $A\neq B$ the C*-category $\Cs(x)$ is always commutative and for $A=B$ it is commutative if and only if $x$ is normal and in these two cases we can immediately apply our spectral results on the horizontal categorified Gel'fand transform to realize that: every morphism in the category $\Cs(x)$ is uniquely described by a continuous section of a ``block'' of the spectral spaceoid of $\Cs(x)$. In more detail we have:
\begin{theorem}[Horizontally categorified continuous functional calculus] \label{th: hcatcal}
Let $x\in \Cs_{AB}$ be an element of a C*-category $\Cs$ and let $\Cs(x)$ denote the (non-necessarily unital) C*-category generated by $x$ inside $\Cs$. If either the objects $A$ and $B$ are different, 
or $A=B$ and the element $x\in \Cs_{AA}$ is normal,
then the C*-category $\Cs(x)$ is full and commutative.  

In that case, for every continuous section $\sigma\in\Gamma(\Sigma(\Cs(x)))_{AB}$ of the block $AB$ of the spectral spaceoid of 
$\Cs(x)$, there is an associated element $\sigma(x)\in \Cs(x)_{AB}$. 

Moreover, the resulting map $\Fg_x:\sigma\mapsto \sigma(x)$ is an isometric $*$-functor from the (possibly non-unital) 
C*-category $\Gamma(\Sigma(\Cs(x)))$ onto $\Cs(x)\subset \Cs$.   
\end{theorem}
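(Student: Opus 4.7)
The strategy is to reduce the statement to the horizontally categorified Gel'fand reconstruction theorem (Theorem~\ref{th: rep}) applied to the C*-subcategory $\Cs(x)$, and to define $\Fg_x$ as the inverse of the Gel'fand transform $\Gg_{\Cs(x)}$.

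First I would verify that $\Cs(x)$ is indeed full and commutative in both cases. When $A=B$ and $x$ is normal, $\Cs(x)=\Cs(x)_{AA}$ is the C*-subalgebra of $\Cs_{AA}$ generated by a single normal element, hence is commutative, and fullness is automatic for a one-object C*-category. When $A\neq B$, the diagonal blocks $\Cs(x)_{AA}$ and $\Cs(x)_{BB}$ are generated by the single positive elements $xx^*$ and $x^*x$ respectively, and therefore are commutative. To establish fullness of the bimodule $\Cs(x)_{AB}=\overline{x\circ\Cs(x)_{BB}}$, I would combine the identity $(xb)(xc)^{*}=x(bc^{*})x^{*}$ with $(xx^{*})^{n}=x(x^{*}x)^{n-1}x^{*}$ (using an approximate unit of $\Cs(x)_{BB}$ to deal with $n=1$ in the non-unital case) to deduce that the closed linear span of the $\Cs(x)_{AA}$-valued inner products coincides with $\Cs(x)_{AA}$; the symmetric argument gives the corresponding density on the $\Cs(x)_{BB}$ side.

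Once $\Cs(x)$ has been recognized as a (possibly non-unital) commutative full C*-category, Theorem~\ref{th: rep}, in the non-unital extension indicated in the remark preceding Section~\ref{sec: hc+}, guarantees that $\Gg_{\Cs(x)}\colon\Cs(x)\to\Gamma(\Sigma(\Cs(x)))$ is an object-bijective, full, faithful and isometric $*$-functor, hence a $*$-isomorphism of C*-categories. I would then simply set $\Fg_{x}:=\Gg_{\Cs(x)}^{-1}$ and, for any $\sigma\in\Gamma(\Sigma(\Cs(x)))_{AB}$, define $\sigma(x):=\Fg_{x}(\sigma)$ as the unique morphism of $\Cs(x)_{AB}$ whose Gel'fand transform equals $\sigma$. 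This makes $\Fg_{x}$ an isometric $*$-functor from $\Gamma(\Sigma(\Cs(x)))$ onto $\Cs(x)\subset\Cs$ by construction.

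The principal technical point is the invocation of the non-unital version of Theorem~\ref{th: rep}: although presented as a routine adaptation in the remark cited, it requires replacing compact by locally compact Hausdorff base spectra and redefining the section functor so as to take only sections vanishing at infinity. Apart from this, the fullness/commutativity verification for $\Cs(x)$ is elementary C*-algebraic manipulation (essentially a horizontal analogue of the standard argument that the C*-algebra generated by a normal element is commutative), and the identification of $\Fg_{x}$ with the inverse of $\Gg_{\Cs(x)}$ is a formal consequence of the reconstruction theorem.
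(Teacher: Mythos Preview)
Your proposal is correct and follows essentially the same approach as the paper: the paper's proof consists of the single line $\Fg_x:=\Gg_{\Cs(x)}^{-1}$, relying on Theorem~\ref{th: rep} (and its non-unital variant mentioned in the remark) exactly as you do. You actually supply more detail than the paper on the verification that $\Cs(x)$ is full and commutative---the paper simply asserts fullness in the paragraph preceding the theorem without spelling out the inner-product argument you sketch---but the overall strategy is identical.
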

\begin{proof} 
Given $x\in \Cs_{AB}$, we simply define 
$\Fg_x:\Gamma(\Sigma(\Cs(x)))\to \Cs$ as the map given by the inverse of the Gel'fand transform 
$\Gg:\Cs(x)\to\Gamma(\Sigma(\Cs(x)))$ i.e.~$\Fg_x:=\Gg_{\Cs(x)}^{-1}$.  
\end{proof} 
We call the $*$-functor $\Fg_x$ the \emph{continuous functional calculus} of $x$.

Note that the previous result might open the way to obtaining a spectral representation (and hence a spectral theorem) also for bounded linear operators that are not normal. In fact if $T:H\to H$ is an arbitrary bounded linear operator on a Hilbert space $H$, we can always regard $T$ as a morphism in an off-diagonal block of the C*-category, with two objects, of bounded linear operators between $H_A:=H=:H_B$.  

\section{Outlook} \label{sec: outlook}

We have introduced commutative C*-categories and started a program for their ``topological description'' in terms of their spectra, here called spaceoids.

In particular, we have obtained a Gel'fand-type theorem for full commutative C*-categories. 
Although the statement of the main result (theorem~\ref{th: catgel}) looks extremely natural, our proofs mostly rely on a ``brute force'' exploitation of the underlying structure and more streamlined arguments are likely to be found.
Also, the result by itself is not as general as possible and certainly it leaves room for extensions in several directions, still hopefully we have provided some insight about how to achieve them.  

For instance, we have only considered the case of $*$-functors between (full, commutative) C*-categories that are bijective on the objects.
(Of course, this trivially includes morphisms between commutative \cs-algebras).
In the next step, one would like to treat the case of $*$-functors that are not bijective on the objects. 
We tend to believe that this should not require significant modifications of our treatment and possibly could be dealt with using relators (that we introduced in~\cite{BCL}).\footnote{For this purpose it should be enough to introduce a category of spaceoids in which the morphisms $f:\Xs_1\to\Xs_2$ between the two base $*$-categories $\Xs_1=\Delta_1\times\Rs_1$ and 
$\Xs_2=\Delta_2\times\Rs_2$ are given by 
$*$-relators $f:=(f_\Delta,f_\Rs)$ where now the $*$-functor $f_\Rs:\Rs_2\to\Rs_1$ is acting in the ``reverse direction''.}

Perhaps a more important point would be to remove the condition of fullness.
At present we have not discussed the issue in detail, but certainly the information that we have already acquired should significantly simplify the task.

Also, along the way, we have somehow taken advantage of our prior knowledge of the Gel'fand and Serre-Swan theorems. Eventually one would like to provide more intrinsic proofs directly in the framework of C*-categories (possibly unifying and extending both Gel'fand and Serre-Swan theorems in a ``strict $*$-monoidal'' version of Takahashi theorem~\cite{Ta1,Ta2}). 
In this respect, it looks promising to work directly with module categories.
Besides, it is somehow disappointing that to date, for $X$ and $Y$ compact Hausdorff spaces, there seems to be no available general classification result for $C(X)$-$C(Y)$-bimodules. 

\medskip

Hilbert C*-bimodules that are not-necessarily imprimitivity bimodules 
should definitely play a role when discussing a classification result for generally non-commutative C*-categories, possibly along the lines of a generalization to C*-categories of the Dauns-Hofmann theorem for C*-algebras~\cite{DH}. 
One might also explore possible connections with the non-commutative Gel'fand spectral theorem of 
R.~Cirelli-A.~Mani\`a-L.~Pizzocchero~\cite{CMP} and the subsequent non-commutative Serre-Swan duality by 
K.~Kawamura~\cite{Kaw} and E.~Elliott-K.~Kawamura~\cite{EK}. 
Similarly, it might be very interesting to investigate the connections between our spectral spaceoids and other spectral notions such as locales and topoi already used in the spectral theorems by 
B.~Banachewki-C.~Mulvey~\cite{BM} and C.~Heunen-K.~Landsmann-B.~Spitters~\cite{HLS}.  

\medskip

In the long run, one would like to (define and) classify commutative Fell bundles over suitable involutive categories. The notion of a Fell bundle could be even generalized to that of a fibered category enriched over another ($*$-monoidal) category.

\medskip

Needless to say, one should analyze more closely the mathematical structure of spaceoids, introduce suitable topological invariants, study their symmetries, \dots, and investigate relations to other concepts that are widely used in other branches of mathematics, e.g.~in algebraic topology/geometry as well as in gauge theories.
Some geometric structures could become apparent when considering the representation of spaceoids as continuous fields of (one-dimensional commutative) 
C*-categories as discussed by E.~Vasselli in~\cite{V4}. 

\medskip

The Gel'fand transform for general commutative \cs-categories raises several questions  (undoubtedly it could be defined for more general Banach categories, leading to a wide range of possibilities for further studies).

In particular, an immediate application would yield a Fourier transform and accordingly a reasonable concrete duality theory for commutative discrete groupoids 
(see~M.~Amini~\cite{Am} for another approach that applies to compact but-not-necessarily-commutative-groupoids,
T.~Timmermann~\cite{Ti} for a more abstract setup and G.~Goehle~\cite{Go} for a discussion of duality for
locally compact Abelian group bundles).  

\medskip

As far as we are concerned, our main motivation to work with C*-categories comes from analyzing the categorical structure of non-commutative geometry (where morphisms of ``non-commutative spaces'' are given by bimodules) and one is naturally led to speculating about the possible evolution of the notion of spectra and morphisms in A.~Connes'  non-commutative geometry 
(cf.~\cite{BCL, BCL0, CCM}). 
In this direction, some of the first questions that come to mind are:
\begin{itemize}
\item[]
Is there a suitable notion of spectral triple over a C*-category?
\item[] 
Is it possible to consider a horizontal categorification of a spectral triple?
\end{itemize}

Of course this represents only the starting point for a much more ambitious program aiming at a ``vertical categorification'' of the notion of spectral triple\footnote{The need for a notion of ``higher spectral triple'' has been already advocated by 
U.~Schreiber~\cite{Sch}.} and from several fronts (see for example~\cite{DTT} and also the very detailed discussion by 
J.~Baez~\cite{B2} on the weblog ``The $n$-Category Caf\'e'') 
it is mounting the evidence that a suitable notion of non-commutative calculus necessarily require a higher (actually $\infty$) categorical setting. 
 
In this respect, it seems reasonable to look for a Gel'fand theorem that applies to (strict) commutative higher categories 
(cf.~\cite{Ko}). 
A suitable definition of strict $n$-C*-categories (cf.~\cite{Z} for the case $n=2$) and the proof of a categorical Gel'fand duality (at least for ``commutative'' full strict $n$-C*-categories) are topics that have recently attracted our attention~\cite{BCLS}. 

\medskip

Finally, in this line of thoughts, one could envisage potential applications of a notion of Gromov-Hausdorff distance (cf.~\cite{Rie2}) for C*-categories. 

\medskip

\emph{Acknowledgments.} 

\smallskip

{\small
We acknowledge the support provided by the Thai Research Fund, grant n.~RSA4780022.  
The main part of this work has been done in the two-year visiting period of R.~Conti to the Department of Mathematics of Chulalongkorn University. 

P.~Bertozzini acknowledges the ``weekly hospitality'' offered by the Department of Mathematics in Chulalongkorn University, where most of the work leading to this publication has been done, from June 2005. 
He also desires to thank A.~Carey and B.~Wang at ANU in Canberra, R.~Street, A.~Davydov and M.~Batanin at Macquarie University in Sydney, K.~Hibbert and J.~Links at the Center for Mathematical Physics in Brisbane, 
A.~Vaz Ferreira at the University of Bologna, F.~Cipriani at the ``Politecnico di Milano'', 
G.~Landi and L.~Dabrowski at SISSA in Trieste, R.~Longo, C.~D'Antoni and L.~Zsido at the University of ``Tor Vergata'' in Rome, W.~Szymanski at the University of Newcastle in Australia, E.~Beggs and T.~Brzezinski at Swansea University, D.~Evans at Cardiff University, A.~D\"oring and B.~Coecke at the CLAP workshop in Imperial College, for the kind hospitality, the much appreciated partial support and most of all for the possibility to offer the talks/seminars where preliminary versions of the results contained in this paper have been announced in October 2006, March/October 2007 and May 2008.  

Finally we thank the two anonymous referees for a very careful reading of the manuscript and for 
suggesting several improvements.
}

\end{document}